\newtheorem{observation}[definition]{Observation}
\newtheorem{property}[definition]{Property}
\newcommand{\red}{\color{red}}
\newcommand{\blue}{\color{blue}}
\newcommand{\gcover}{{\rm gcover}}
\newcommand{\gcovere}{{\rm gcover_{e}}}
\newcommand{\gparte}{{\rm gpart_{e}}}
\newcommand{\BF}{{\rm BF}}
\newcommand{\diam}{{\rm diam}}
\begin{document}
\title{The geodesic cover problem for butterfly networks}

\author{
\!Paul Manuel \\
Department of Information Science, College of Computing Science and Engineering, \\
Kuwait University, Kuwait \\
\href{mailto:pauldmanuel@gmail.com}{pauldmanuel@gmail.com},~\href{mailto:p.manuel@ku.edu.kw}{p.manuel@ku.edu.kw}
\and
Sandi Klav\v zar \\
Faculty of Mathematics and Physics, University of Ljubljana, Slovenia \\
Faculty of Natural Sciences and Mathematics, University of Maribor, Slovenia \\
Institute of Mathematics, Physics and Mechanics, Ljubljana, Slovenia \\
\href{mailto:sandi.klavzar@fmf.uni-lj.si}{sandi.klavzar@fmf.uni-lj.si}
\and
R. Prabha \\
Department of Mathematics, Ethiraj College for Women, Chennai, Tamilnadu, India\\
\href{mailto:prabha75@gmail.com}{prabha75@gmail.com}
\and
Andrew Arokiaraj \\
Department of Mathematics, School of Science and Humanities, \\
Shiv Nadar University Chennai, Kalavakkam, India \\
\href{mailto:andrew23610032@snuchennai.edu.in}{andrew23610032@snuchennai.edu.in}
}


\maketitle
%
%
%
%
%

\runninghead{P. Manuel et al.}{The geodesic cover problem for butterfly networks}

\vspace{-5ex}

\begin{abstract}
A geodesic cover, also known as an isometric path cover, of a graph is a set of geodesics which cover the vertex set of the graph. An edge geodesic cover of a graph is a set of geodesics which cover the edge set of the graph. 
The geodesic (edge) cover number of a graph is the cardinality of a minimum (edge) geodesic cover.
The (edge) geodesic cover problem of a graph is to find the (edge) geodesic cover number of the graph. 
Surprisingly, only partial solutions for these problems are available for most situations. In this paper we demonstrate that the geodesic cover number of the $r$-dimensional butterfly is $\lceil (2/3)2^r\rceil$ and that its edge geodesic cover number is $2^r$. 
\end{abstract}

\noindent{\bf Keywords}: isometric path; geodesic cover; edge geodesic cover; bipartite graph, butterfly network 

\medskip
\noindent{\bf AMS Subj.\ Class.~(2020)}: 05C12, 05C70

\section{Introduction}

Let $G = (V(G), E(G))$ be a graph. The {\it distance} $d(x, y)$ between vertices $x,y\in V(G)$ is the length of a shortest $x, y$-path in $G$; any such path is called a {\it geodesic}. The {\it diameter} $\diam(G)$ of $G$ is the maximum distance between any pair of vertices in $G$, that is,  
$\diam(G) = \max_{u,v} d_G(u, v)$. A subgraph $H$ of $G$ is {\it isometric} if $d_G(x, y) = d_H(x, y)$ for all $x, y\in V(H)$. 

A \textit{geodesic cover} of a graph $G$ is a set $S$ of geodesics such that each vertex of $G$ belongs to at least one geodesic of $S$. It is popularly known as \textit{isometric path cover}~\cite{Fitz1999, Fitz2001, Manuel18, Manuel19, pan-2006}. The geodesic cover problem is one of the fundamental problems in graph theory. The concept of geodesic cover is widely used in social networks, computer networks, and fixed interconnection networks \cite{Manuel18}. 
Throughout this paper, $Z(G)$ denotes the set of all geodesics of $G$ and $M(G)$ denotes the set of all maximal (with respect to inclusion) geodesics of $G$.  

Given $Y \subseteq Z(G)$ and $S \subseteq V(G)$,  a \textit{geodesic cover} of the triple $(Y,S, G)$ is a set of geodesics of $Y$ that cover $S$. 
Given $Y \subseteq Z(G)$ and $S \subseteq V(G)$, the \textit{geodesic cover number} of $(Y,S, G)$, $\gcover(Y,S,G)$, is the minimum number of geodesics of $Y$ that cover $S$. Note that there exist situations where $\gcover(Y,S,G)$ may not exist. 

When $Y\subseteq Z(G)$ and $S=V$, $\gcover(Y,V,G)$ is denoted by $\gcover(Y,G)$.

When $Y=Z(G)$ and $S\subseteq V$,  $\gcover(Z(G),S,G)$ is denoted by $\gcover(S,G)$.

When $Y=Z(G)$ and $S = V$,  $\gcover(Z(G),V,G)$ is denoted by $\gcover(G)$.

\noindent
Given $Y \subseteq Z(G)$ and $S \subseteq V(G)$, the \textit{geodesic cover problem} of $(Y,S)$ is to find $\gcover(Y,S,G)$ of $G$. The \textit{geodesic cover problem} of $G$ is to find $\gcover(G)$ of $G$.
An \textit{edge geodesic cover} of a graph is a set of geodesics which cover the edge set of the graph. 
The \textit{edge geodesic cover number} of a graph $G$, $\gcovere(G)$, is the cardinality of a minimum edge geodesic cover.
The \textit{edge geodesic cover problem} of a graph $G$ is to find $\gcovere(G)$.

The geodesic cover problem is known to be NP-complete~\cite{ChDa22,LiSa22}. Apollonio  et al. \cite{ApCaSi04} have studied induced path covering problems in grids. Fisher and Fitzpatrick \cite{Fish2001} have shown that the geodesic cover number of the ($r\times r$)-dimensional grid is $\lceil 2r/3\rceil$.  
The  geodesic  cover  number  of  the  $(r\times s)$-dimensional  grid  is $s$ when $r\geq s(s-1)$, cf.~\cite{Manuel19}. On the other hand, the complete solution of the geodesic cover problem for  the two-dimensional grid is still unknown, cf.~\cite{Manuel19}.  There is no literature for the geodesic cover problem on multi-dimensional grids. 
 
The geodesic cover problems for cylinder and $r$-dimensional grids are discussed in \cite{Manuel19}. In particular, the isometric path cover number of the $(r\times r)$-dimensional torus is $r$ when $r$ is even, and is either $r$ or $r+ 1$ when $r$ is odd. In~\cite{PaCh05}, the geodesic cover problem was studied on block graphs, while in~\cite{pan-2006} it was investigated on complete $r$-partite graphs and Cartesian products of two or three complete graphs.   
  
Fitzpatrick et al.~\cite{Fitz1999, Fitz2001} have shown that the geodesic cover number of the hypercube $Q_r$ is at least $2r/(r+1)$ and they have provided a partial solution when $r+1$ is a power of $2$. The complete solution for the geodesic cover number of hypercubes is also not yet known, cf.~\cite{Fitz1999, Fitz2001, Manuel18}.  Manuel \cite{Manuel19} has proved that the geodesic cover number of the $r$-dimensional Benes network is $2^r$. In~\cite{Manuel18,Manuel19} the (edge) geodesic cover problem of butterfly networks was stated as an open problem. In this paper we solve these two problems. 

\section{Preliminaries}
\label{sec:preliminaries}

The results discussed in this section will be used as tools to prove the key results of this paper. 

\begin{lemma}
	\label{lem:gcover-max-geo}
	If $G$ is a connected graph, then the following hold. 
	\begin{enumerate} 
		\item[(i)]
		If $S' \subseteq S'' \subseteq V(G)$ and $Y\subseteq Z(G)$, then $\gcover(Y, S'', G)$ $\geq$ $\gcover(Y, S', G)$.
		\item[(ii)] 
		If $Y' \subseteq Y''\subseteq Z(G)$ and $S\subseteq V(G)$, then $\gcover(Y'', S, G) \leq \gcover(Y', S, G)$.
		\item[(iii)] 	
		$\gcover(G)$ $=$ $\gcover(M(G),G)$.
	\end{enumerate}
\end{lemma}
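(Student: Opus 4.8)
The plan is to dispose of the three items separately, each by the same elementary principle: a collection of geodesics that is feasible for a ``larger'' instance is feasible for a ``smaller'' one, so the optimum can only go down. Throughout I adopt the convention that if no admissible cover exists the value is $+\infty$, so all inequalities are read in $\mathbb{N}\cup\{+\infty\}$.

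For (i) and (ii) I would argue as follows. Suppose $\mathcal{P}\subseteq Y$ is any family of geodesics whose union contains $S''$. Since $S'\subseteq S''$, the same family $\mathcal{P}$ covers $S'$, hence is a candidate in the minimization defining $\gcover(Y,S',G)$; therefore $\gcover(Y,S',G)\le|\mathcal{P}|$. Taking $\mathcal{P}$ to be an optimal cover of $(Y,S'',G)$ (if one exists) gives (i). For (ii), let $\mathcal{P}\subseteq Y'$ cover $S$. Because $Y'\subseteq Y''$, the family $\mathcal{P}$ consists of geodesics of $Y''$ and still covers $S$, so it is admissible for $\gcover(Y'',S,G)$, whence $\gcover(Y'',S,G)\le|\mathcal{P}|$; choosing $\mathcal{P}$ optimal for $(Y',S,G)$ yields (ii).

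For (iii) I would first note that, by the notational conventions, $\gcover(G)=\gcover(Z(G),V(G),G)$ and $\gcover(M(G),G)=\gcover(M(G),V(G),G)$, and that $M(G)\subseteq Z(G)$. The inequality $\gcover(G)\le\gcover(M(G),G)$ is then immediate from (ii) applied with $Y'=M(G)$, $Y''=Z(G)$, $S=V(G)$. For the reverse inequality, take an optimal cover $\{P_1,\dots,P_k\}$ of $V(G)$ with each $P_i\in Z(G)$ and $k=\gcover(G)$. Since $G$ is finite, every geodesic extends to a maximal one: repeatedly adjoin a vertex at an end of the current path as long as the result is still a geodesic; the process halts because path lengths are bounded by $\diam(G)$, producing $\widehat{P_i}\in M(G)$ with $P_i\subseteq\widehat{P_i}$. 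Then $\{\widehat{P_1},\dots,\widehat{P_k}\}\subseteq M(G)$ still covers $V(G)$, so $\gcover(M(G),G)\le k=\gcover(G)$, and (iii) follows.

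The only step with any substance is the extension argument in (iii): one must check that greedily lengthening a geodesic indeed terminates and lands in $M(G)$, which is exactly where finiteness of $G$ enters. All other steps are the one-line ``feasible for the larger problem implies feasible for the smaller problem'' observation, so I expect no real obstacle beyond stating this extension carefully.
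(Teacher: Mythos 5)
Your proposal is correct and follows essentially the same route as the paper: (i) and (ii) are the standard ``feasible for the larger instance is feasible for the smaller'' observations, and (iii) combines (ii) with the fact that every geodesic extends to a maximal geodesic, which is exactly the paper's argument (the paper merely asserts this extension property, while you justify it via the greedy lengthening bounded by $\diam(G)$).
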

\begin{proof}
Assertions (i) and (ii) are straightforward, hence we consider only (iii). Since $M(G) \subseteq Z(G)$,  (ii) implies \[\gcover(G) = \gcover(Z(G), V, G) \leq \gcover(M(G), V, G) = \gcover(M(G), G).\] Since each geodesic is a subpath of some maximal geodesic, for each geodesic cover $S$ of $Z(G)$, there exists a geodesic cover $S'$ of $M(G)$ such that $|S| = |S'|$. Therefore, \[\gcover(Z(G), V, G) \geq \gcover(M(G), V, G)\] and consecutively \[\gcover(G) = \gcover(Z(G), V, G) \geq \gcover(M(G), V, G) = \gcover(M(G), G).\]
\end{proof}

\begin{proposition}
	\label{prop:gcover-com-bipartite}
	If $K_{r,r}$, $r \ge 2$, is a complete bipartite graph, then $\gcover(K_{r,r}) = \lceil(2/3)r\rceil$. 
\end{proposition}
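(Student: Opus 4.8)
The plan is to prove the matching bounds $\gcover(K_{r,r})\ge\lceil(2/3)r\rceil$ and $\gcover(K_{r,r})\le\lceil(2/3)r\rceil$. For the lower bound, note that $K_{r,r}$ has diameter $2$: vertices in different parts are adjacent, and two vertices in the same part have a common neighbour. Hence every geodesic of $K_{r,r}$ has length at most $2$ and therefore covers at most $3$ vertices, so covering all $2r$ vertices requires at least $\lceil 2r/3\rceil=\lceil(2/3)r\rceil$ geodesics.

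For the upper bound, let $A,B$ be the parts of $K_{r,r}$ and put $m:=\lceil(2/3)r\rceil$; it suffices to cover $V(K_{r,r})$ by $m$ geodesics of length $2$. Every such geodesic is of \emph{type $A$}, $a\,b\,a'$ with $a,a'\in A$ and $b\in B$, or of \emph{type $B$}, $b\,a\,b'$ with $b,b'\in B$ and $a\in A$, and the middle vertex may be chosen freely in the opposite part since $K_{r,r}$ is complete bipartite. The key point is a decoupling: if we use $x$ geodesics of type $A$ and $y=m-x$ of type $B$, then $A$ is covered precisely by the $x$ endpoint-pairs of the type-$A$ geodesics together with the $y$ middle vertices of the type-$B$ geodesics, while $B$ is covered by the $y$ endpoint-pairs of the type-$B$ geodesics together with the $x$ middle vertices of the type-$A$ geodesics, and these four collections of vertices may be chosen independently, subject only to their prescribed sizes. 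Consequently it is enough to pick nonnegative integers $x,y$ with
\[
x+y=m,\qquad 2x+y\ge r,\qquad x+2y\ge r,
\]
after which the cover is realised concretely: take the $x$ endpoint-pairs to be (nearly) disjoint $2$-subsets of $A$ covering $\min(2x,r)$ of its vertices, use middle vertices of the type-$B$ geodesics to cover the (at most $y$, by $2x+y\ge r$) remaining vertices of $A$, and dually for $B$; any still-unassigned middle vertices are arbitrary. Here $r\ge 2$ guarantees that $2$-subsets of $A$ exist, and when $2x\ge r$ one has $x\ge\lceil r/2\rceil$, so $x$ two-subsets really do suffice to cover $\min(2x,r)$ vertices.

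It remains only to check that such $x,y$ exist. Using $x+y=m$, the last two conditions become $x\ge r-m$ and $y\ge r-m$, i.e.\ $r-m\le x\le 2m-r$; since $r-m=\lfloor r/3\rfloor\ge 0$ and $(2m-r)-(r-m)=3m-2r\ge 0$ by definition of the ceiling, this integer interval is nonempty, and any $x$ in it works (for definiteness, one with $|x-y|\le 1$). I expect the only steps requiring genuine care to be the decoupling claim and the short realisation argument just sketched — in particular that the choices forced on the $A$-side and on the $B$-side never conflict — since the lower bound and the arithmetic feasibility are immediate. A quick check of $r=2,3,4$ (where $m=2,2,3$) produces explicit small covers and guards against an off-by-one slip.
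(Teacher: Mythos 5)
Your proof is correct and takes essentially the same approach as the paper: the lower bound follows because $K_{r,r}$ has diameter $2$, so every geodesic covers at most three of the $2r$ vertices, and the upper bound is an explicit construction. The paper leaves that construction as ``a simple exercise''; your decoupled feasibility system $x+y=m$, $2x+y\ge r$, $x+2y\ge r$ with $r-m=\lfloor r/3\rfloor\le x\le 2m-r$ is a valid way to carry it out.
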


\begin{proof}
Clearly, each maximal geodesic of $K_{r,r}$ is a (diametral) path of length $2$. Therefore, $\gcover(K_{r,r}) \geq \lceil(2/3)r\rceil$. On the other hand, it is a simple exercise to construct a geodesic cover of cardinality $\lceil(2/3)r\rceil$. 
\end{proof}

Butterfly is considered as one of the best parallel architectures~\cite{HsuLin08, Leighton1992, SuRaRaRa-2021}. For $r\ge 3$, the $r$-dimensional {\it butterfly network} $BF(r)$ has vertices $[j, s]$, where $s\in \{0,1\}^r$ and $j\in \{0,1,\ldots, r\}$. The vertices $[j, s]$ and $[j', s ']$ are adjacent if $j'=j+1$, and either $s = s'$ or $s$ and $s'$ differ precisely in the $j^{\rm th}$ bit. $BF(r)$ has $(r + 1)2^r$ vertices and $r2^{r+1}$ edges. A vertex $[j, s]$ is at \textit{level} $j$ and {\it row} $s$. There are two standard graphical representations for $\BF(r)$, normal representation and diamond representation, see Fig.~\ref{fig:Butterfly-diamond-normal}. 

\begin{figure}[ht!]
	\centering
	\includegraphics[scale=0.42]{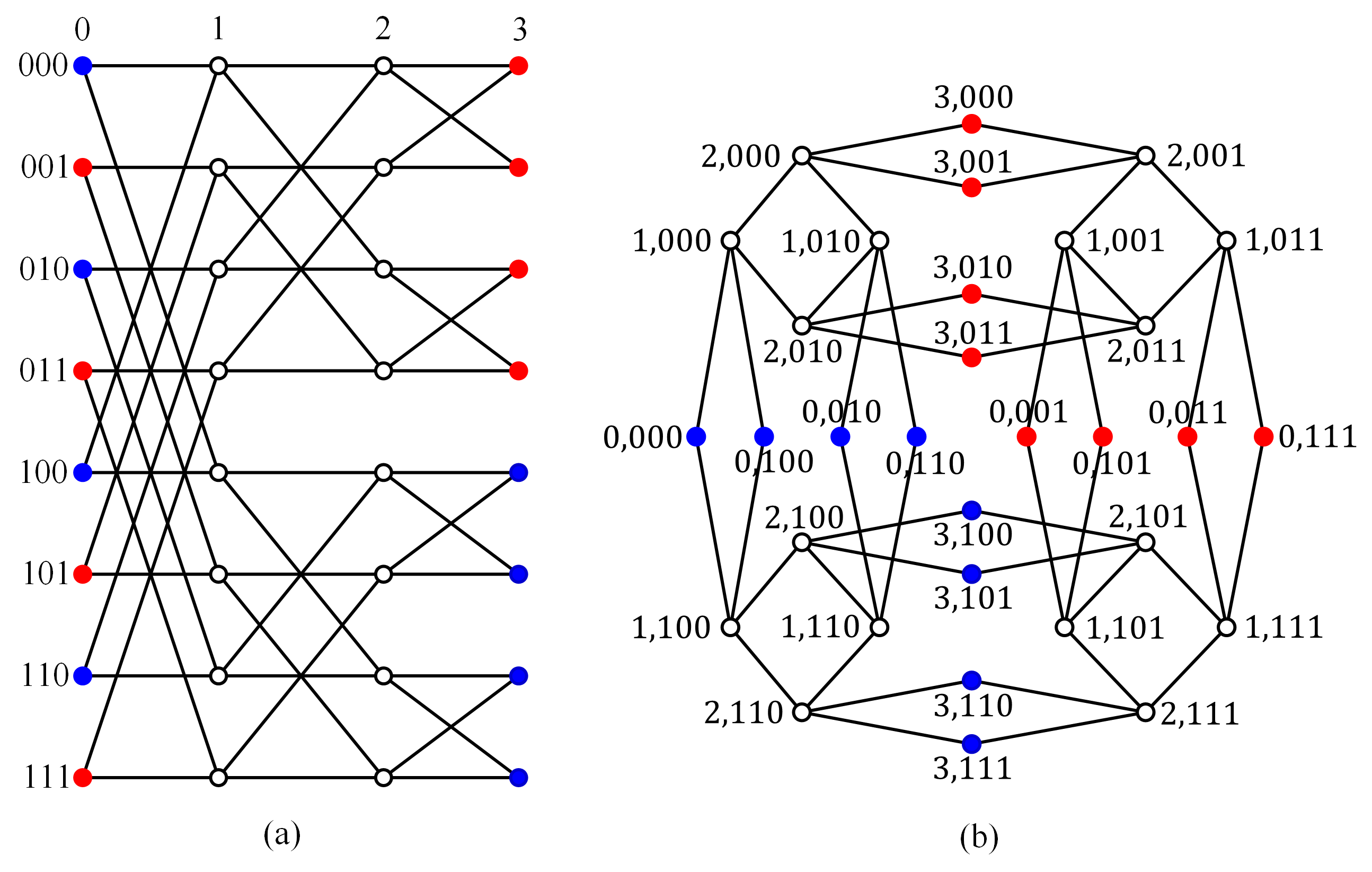}
	\caption{(a) Normal representation of $\BF(3)$  (b) Diamond representation of $\BF(3)$.}
	\label{fig:Butterfly-diamond-normal}
\end{figure}

Estimating the lower bound of $\gcover(\BF(r))$ in Section~\ref{subsec:lowerbound-gcover-butterfly}, we will use the diamond representation of $\BF(r)$, while estimating the upper bound of $\gcover(\BF(r))$ in Section \ref{subsec:upperbound-gcover-butterfly}, the normal representation of $\BF(r)$ will be used.

\begin{lemma}
	\label{lem:butterfly-geo0}
	A geodesic of $\BF(r)$ contains at most two vertices of level $0$ and at most two  vertices of level $r$. Moreover, if a geodesic contains two vertices of level $0$, then they are the ends of the geodesic (and similarly for level r).
\end{lemma}

\begin{proof}
Let us assume that there exists a geodesic $P$ which contains more than two vertices of level $0$, say $v_{i}, v_{j}$, and $v_{k}$. See Fig.~\ref{fig:Butterfly-4-dim1}(b). Then one of these three vertices must be an internal vertex of $P$, say $v_j$. 	The deletion of the vertices at level $0$ from $\BF(r)$ disconnects $\BF(r)$ into two vertex disjoint components $G_1$ and $G_2$, where both $G_1$ and $G_2$ are isomorphic to $\BF(r-1)$, cf.~\cite{Leighton1992,RaMaPa16, ToDa05}. Since $v_{j}$ is an internal vertex of $P$ and of degree $2$, its neighbors $v_{j-1}$ and $v_{j+1}$ also lie in $P$. Moreover, one of the adjacent vertices $v_{j-1}$, $v_{j+1}$ lie in $G_1$ and the other lie in $G_2$. Also, $v_i$ has two adjacent vertices, say $v_{i-1}\in V(G_1)$ and $v_{i+1}\in V(G_2)$. Since $G_1$ and $G_2$ are isomorphic, the $v_{i-1},v_{j-1}$-geodesic and the $v_{i+1},v_{j+1}$-geodesic are isomorphic, which in turn implies that $d(v_{i},v_{j-1}) = d(v_{i},v_{j+1})$. This is not possible as $P$ is a geodesic.
\end{proof}

As the butterfly network is symmetrical  with respect to level $0$, it is symmetrical with respect to level $r$, cf.~\cite{Leighton1992, RaMaPa16, ToDa05}. Using the logic of the proof of Lemma~\ref{lem:butterfly-geo0}, one can prove the following.

\begin{corollary}
	\label{cor:butterfly-geo}
	If both end vertices of a geodesic $P$ of $\BF(r)$ are either at level $0$ or at level $r$, then $P$ is maximal.  
\end{corollary}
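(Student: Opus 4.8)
The plan is to argue by contradiction, imitating the vertex-deletion and symmetry argument used for Lemma~\ref{lem:butterfly-geo0}. By the symmetry of $\BF(r)$ with respect to level $r$, it suffices to handle the case in which both end vertices $u$ and $v$ of $P$ lie at level $0$; since level-$0$ vertices are pairwise non-adjacent, $P$ then has at least three vertices. Suppose $P$ is not maximal. A geodesic properly containing $P$ extends it at $u$ or at $v$, and, restricting such a geodesic to $P$ together with one additional vertex, we may assume there is a geodesic $P' = P + v''$, where $v''$ is a neighbour of (say) $v$ that does not lie on $P$. A level-$0$ vertex has exactly two neighbours, both at level $1$; one of them is the neighbour $v'$ of $v$ on $P$, so the other must be $v''$. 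Consequently $v$ is an internal vertex of the geodesic $P'$, with neighbours $v'$ and $v''$.

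Next I would isolate the symmetry that is really doing the work in the proof of Lemma~\ref{lem:butterfly-geo0}. Let $\psi$ be the map on $V(\BF(r))$ that complements the first bit of the row of every vertex, so $\psi([j,s])$ is obtained from $[j,s]$ by flipping $s$ in its first coordinate. A routine check shows that $\psi$ is an automorphism of $\BF(r)$ (it sends vertical edges to vertical edges and crossing edges to crossing edges), that it is an involution, that it fixes level $0$ setwise, and that it interchanges the two copies of $\BF(r-1)$ obtained when level $0$ is deleted; in particular $\psi(v') = v''$. The crucial point, which makes precise the step ``$d(v_i,v_{j-1}) = d(v_i,v_{j+1})$'' in the proof of Lemma~\ref{lem:butterfly-geo0}, is that a level-$0$ vertex $w$ and its image $\psi(w)$ have exactly the same pair of neighbours, hence are ``false twins'': $d(w,x) = d(\psi(w),x)$ for every vertex $x \notin \{w,\psi(w)\}$. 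Since $\psi$ is a distance-preserving involution we also have $d(w,\psi(x)) = d(\psi(w),x)$, and combining the two shows that any level-$0$ vertex $w$ satisfies $d(w,x) = d(w,\psi(x))$ for every $x$ lying at level at least $1$.

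Finally, applying this with $w = u$ and $x = v'$ gives $d(u,v') = d(u,\psi(v')) = d(u,v'')$, whereas reading distances off the geodesic $P'$, which goes from $u$ through $v'$, then $v$, then $v''$, yields $d(u,v'') = d(u,v') + 2$. This contradiction shows that $P$ cannot be extended at $v$, and the same argument rules out an extension at $u$, so $P$ is maximal. The step I expect to need the most care is the distance identity of the previous paragraph---equivalently, a clean justification of the corresponding claim in the proof of Lemma~\ref{lem:butterfly-geo0}---and the observation that a level-$0$ vertex and its $\psi$-image are false twins is what reduces it to the fact that $\psi$ is a distance-preserving automorphism; the remaining ingredients ($\psi$ is an automorphism, level-$0$ vertices have degree two, and the two neighbours of such a vertex lie in different components of $\BF(r)$ once level $0$ is removed) are precisely the facts already used for Lemma~\ref{lem:butterfly-geo0}.
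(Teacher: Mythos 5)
Your proof is correct and follows essentially the same route the paper intends, namely rerunning the argument of Lemma~\ref{lem:butterfly-geo0}: an extension of $P$ would make a degree-$2$ level-$0$ endpoint internal, forcing its two level-$1$ neighbours (one in each component of $\BF(r)$ minus level $0$) to be equidistant from the other endpoint. In fact your explicit automorphism $\psi$ and the false-twin observation make rigorous the step ``$d(v_i,v_{j-1})=d(v_i,v_{j+1})$'' that the paper justifies only by saying $G_1$ and $G_2$ are isomorphic, so your write-up is, if anything, more careful than the source.
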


\begin{lemma}
	\label{lem:butterfly-geo2}
	A geodesic of $\BF(r)$ covers at most three vertices of degree $2$.  
\end{lemma}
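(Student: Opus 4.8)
The plan is to first pin down exactly which vertices of $\BF(r)$ have degree $2$, and then to reduce the statement to Lemma~\ref{lem:butterfly-geo0} and Corollary~\ref{cor:butterfly-geo}.

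First I would observe that the vertices of $\BF(r)$ of degree $2$ are precisely those at level $0$ and at level $r$: a vertex $[j,s]$ with $1 \le j \le r-1$ has two neighbors at level $j-1$ and two neighbors at level $j+1$, hence degree $4$, whereas a vertex at level $0$ (respectively $r$) has only its two neighbors at level $1$ (respectively $r-1$). (As a consistency check, this accounts correctly for the $r\,2^{r+1}$ edges of $\BF(r)$.) Consequently, Lemma~\ref{lem:butterfly-geo0} already gives that a geodesic $P$ of $\BF(r)$ contains at most $2+2=4$ vertices of degree $2$, and the whole task reduces to excluding the case that $P$ contains exactly two vertices at level $0$ \emph{and} exactly two vertices at level $r$.

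The key step is the claim: if a geodesic $P$ of $\BF(r)$ contains two vertices at level $0$, then both of them are end vertices of $P$. To prove it, write $P = v_0 v_1 \cdots v_L$ and let $v_p, v_q$ with $p < q$ be its two level-$0$ vertices. The subpath $v_p v_{p+1}\cdots v_q$ is again a geodesic (any subpath of a geodesic is a geodesic), and both of its end vertices lie at level $0$, so by Corollary~\ref{cor:butterfly-geo} it is a maximal geodesic. Since a maximal geodesic cannot be a proper subpath of a geodesic, this forces $p=0$ and $q=L$, i.e.\ $v_p$ and $v_q$ are the end vertices of $P$. The analogous statement for level $r$ follows verbatim, again via Corollary~\ref{cor:butterfly-geo}.

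To finish, suppose $P$ contained four vertices of degree $2$. Then by Lemma~\ref{lem:butterfly-geo0} exactly two of them lie at level $0$ and exactly two at level $r$; by the claim the two level-$0$ vertices are the end vertices of $P$, and likewise the two level-$r$ vertices are the end vertices of $P$, which is impossible since $P$ has only two end vertices and no vertex lies simultaneously at level $0$ and level $r$. Hence $P$ covers at most three vertices of degree $2$. I do not expect a real obstacle here; the only points needing a little care are the (routine) identification of the degree-$2$ vertices and the observation that the relevant subpath is again a geodesic so that Corollary~\ref{cor:butterfly-geo} is applicable.
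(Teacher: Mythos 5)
Your proposal is correct and follows essentially the same route as the paper: reduce to the case of two level-$0$ and two level-$r$ vertices via Lemma~\ref{lem:butterfly-geo0}, then apply Corollary~\ref{cor:butterfly-geo} to the subpaths joining same-level pairs and derive a contradiction from their maximality. You merely spell out two steps the paper leaves implicit (the identification of the degree-$2$ vertices with levels $0$ and $r$, and the explicit argument that a maximal geodesic cannot be a proper subpath), which is a welcome clarification but not a different proof.
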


\begin{proof}
Suppose a geodesic $P$ contains four vertices $a,b,c,d$ of degree $2$. Then all the vertices $a,b,c,d$ are at level $0$ or at level $r$. By Lemma \ref{lem:butterfly-geo0}, three of these vertices can not be at the same level. Assume without loss of generality that $a$ and $b$ are at level $0$ and $c$ and $d$ are at level $r$. Let $P(a,b)$ be a subpath of $P$ between $a$ and $b$, and $P(c,d)$ the subpath of $P$ between $c$ and $d$. By Corollary~\ref{cor:butterfly-geo}, $P(a,b)$ and $P(c,d)$ are maximal geodesics, a contradiction. 
\end{proof}

\begin{corollary}
\label{cor:butterfly-geo1}
If a geodesic $P$ of $\BF(r)$ covers three vertices of degree $2$, then the end vertices of $P$ are of degree $2$, and $P$ is maximal.
\end{corollary}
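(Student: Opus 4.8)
The plan is to reuse the structural facts already established for geodesics in $\BF(r)$, particularly Lemmas~\ref{lem:butterfly-geo0} and~\ref{lem:butterfly-geo2} together with Corollary~\ref{cor:butterfly-geo}. First I would observe that in $\BF(r)$ the only vertices of degree $2$ are those at level $0$ and level $r$: every internal level $1\le j\le r-1$ has degree $4$ (each vertex has two neighbours at level $j-1$ and two at level $j+1$, unless $j\in\{0,r\}$), while a vertex $[0,s]$ or $[r,s]$ has degree exactly $2$. So the three degree-$2$ vertices covered by $P$, call them $a$, $b$, $c$, all lie in level $0\cup r$.

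Next, by Lemma~\ref{lem:butterfly-geo0} a geodesic meets level $0$ in at most two vertices and level $r$ in at most two vertices, so of the three vertices $a,b,c$ we cannot have all three at the same level; hence two of them lie at one end-level and the third at the other. Say without loss of generality $a$ and $b$ are at level $0$ and $c$ is at level $r$ (the case of two at level $r$ is symmetric by the level-$0$/level-$r$ symmetry of $\BF(r)$ noted before Corollary~\ref{cor:butterfly-geo}). Now consider the subpath $P(a,b)$ of $P$ joining $a$ and $b$: by Corollary~\ref{cor:butterfly-geo} it is already a maximal geodesic, since both its end vertices are at level $0$. Therefore $P(a,b)=P$, which forces the two end vertices of $P$ to be $a$ and $b$ themselves — both of degree $2$ — and also forces $P$ to be maximal.

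The only remaining point is to reconcile this with the presence of $c$ at level $r$ on the same path $P=P(a,b)$; but this is automatic rather than an obstacle, because $P(a,b)$ being a maximal geodesic whose endpoints are both at level $0$ is perfectly consistent with it passing through an intermediate level-$r$ vertex $c$ (such "up-and-down" geodesics exist in $\BF(r)$). So the conclusion — that the end vertices of $P$ have degree $2$ and that $P$ is maximal — follows directly. I do not anticipate a genuine difficulty here; the mild care needed is just in verifying the degree-$2$ characterization of levels $0$ and $r$ and in invoking the symmetry to dispose of the mirror case, both of which are standard facts about $\BF(r)$ already cited in the preceding lemmas.
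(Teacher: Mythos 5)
Your proof is correct and follows exactly the route the paper intends: the paper states this corollary without proof, but the argument it implies (all degree-$2$ vertices lie at levels $0$ and $r$, Lemma~\ref{lem:butterfly-geo0} forces a $2{+}1$ split between the two levels, and Corollary~\ref{cor:butterfly-geo} applied to the subpath joining the two same-level vertices forces that subpath to be all of $P$) is precisely yours, mirroring the device used in the proof of Lemma~\ref{lem:butterfly-geo2}. No gaps.
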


\section{The geodesic cover problem for $\BF(r)$}
\label{sec:gcover-butterfly}

\subsection{A lower bound for $\gcover(\BF(r))$}
\label{subsec:lowerbound-gcover-butterfly}
\subsubsection{Revisiting properties of $\BF(r)$}
\label{subsec:properies-butterfly}
In this section, we use the following notations.
Let $U$ and $W$ denote the sets of vertices at  level $0$ and  level $r$ in $\BF(r)$, respectively. Further, let $U = U^{b} \bigcup U^{r}$, where $U^{b}= \{u^{b}_1, u^{b}_2,\dots, u^{b}_{2^{r-1}}\}$ and $U^{r}=  \{u^{r}_1, u^{r}_2,\dots, u^{r}_{2^{r-1}}\}$. Similarly, $W = W^{b} \bigcup W^{r}$ where
$W^{b}= \{w^{b}_1, w^{b}_2,\dots, w^{b}_{2^{r-1}}\}$ and $W^{r}=  \{w^{r}_1, w^{r}_2,\dots, w^{r}_{2^{r-1}}\}$, see Fig.~\ref{fig:Butterfly-4-dim1}(a).

\begin{figure}[ht!]
	\centering
	\includegraphics[scale=0.40]{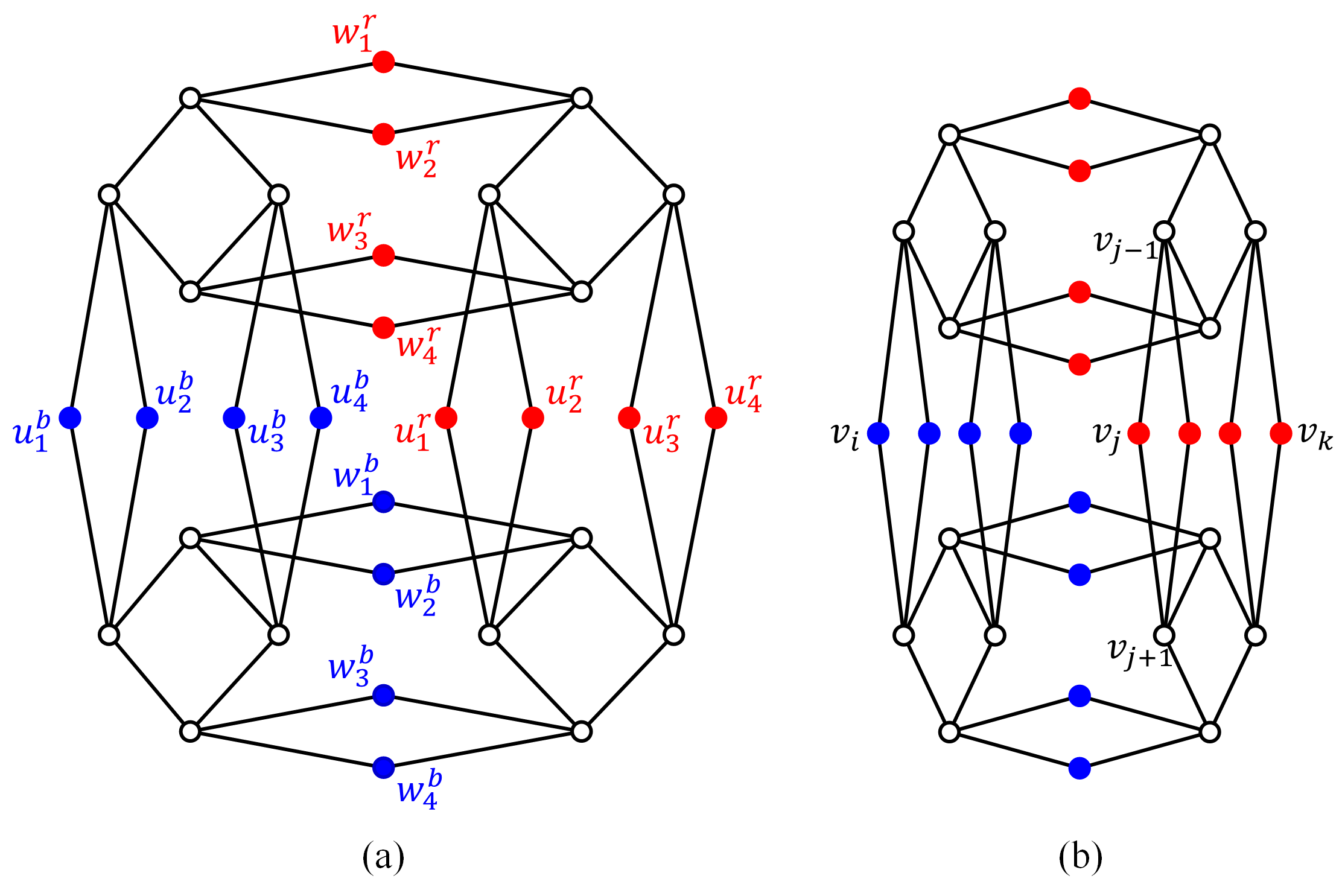}
	\caption{(a) Vertices from $U^{b}$ and $W^{b}$ are blue, vertices from $U^{r}$ and $W^{r}$ are red. (b) $v_{j}$ is an internal vertex of $P$ and of degree $2$. Its neighbors $v_{j-1}$ and $v_{j+1}$ lie in $P$. One of the adjacent vertices $\{v_{j-1}$, $v_{j+1}\}$ lie in $G_1$ and the other lie in $G_2$.}
	\label{fig:Butterfly-4-dim1}
\end{figure}

In order to gain an in-depth understanding  of the behavior of the geodesics of $\BF(r)$, it is necessary to enumerate all the maximal geodesics of $\BF(r)$,  cf.~\cite{HsuLin08,Manuel2008}.

\begin{lemma}
	\label{lem:max-geo-butterfly}
The following facts hold in $\BF(r)$. 
	\begin{enumerate}
	\item
	For $u^{b}_{i},u^{b}_{j} \in U^b$, a maximal geodesic $P(u^{b}_{i},u^{b}_{j})$ between $u^{b}_{i}$ and $u^{b}_{j}$ does not intersect $W$. For  $u^{r}_{i},u^{r}_{j} \in U^r$, a maximal geodesic $P(u^{r}_{i},u^{r}_{j})$ between $u^{r}_{i}$ and $u^{r}_{j}$ does not intersect $W$. 
	\item
	For $w^{b}_{i},w^{b}_{j} \in W^b$, a maximal geodesic $P(w^{b}_{i},w^{b}_{j})$ between $w^{b}_{i}$ and $w^{b}_{j}$ does not intersect $U$. For  $w^{r}_{i},w^{r}_{j} \in W^r$, a maximal geodesic $P(w^{r}_{i},w^{r}_{j})$ between $w^{r}_{i}$ and $w^{r}_{j}$ does not intersect $U$. 
	\item
	If $u^b \in U^b$, $u^r \in U^r$, and $w \in W$,  then there is a unique geodesic $P_w(u^b, u^r)$ between $u^b$ and $u^r$ passing through $w$. This geodesic is the concatenation of geodesics $P(u^b,w)$ and $P(w,u^r)$,  where $u^b \in U^b$, $u^r \in U^r$,  and $w \in W$. Consequently, if $u^b \in U^b$ and $u^r \in U^r$, then there are $2^r$ maximal $u^b, u^r$-geodesics.
		
	If $w^b \in W^b$, $w^r \in W^r$, and $u \in U$, then there is a unique geodesic $P_u(w^b, w^r)$ between $w^b$ and $w^r$ passing through $u$. This geodesic is the concatenation of geodesics $P(w^b,u)$ and $P(u,w^r)$,  where $w^b \in W^b$, $w^r \in W^r$,  and $u \in U$. Hence, if $w^b \in W^b$ and $w^r \in W^r$, then there are $2^r$ maximal $w^b, w^r$-geodesics.
	\end{enumerate}
\end{lemma}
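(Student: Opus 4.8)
\medskip

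The plan is to first describe the geodesics of $\BF(r)$ combinatorially and then read off all three assertions. Fix the convention that the edges between consecutive levels $k-1$ and $k$ flip one fixed bit $\pi(k)$, so that along a level-monotone path bit $\pi(k)$ can change only at the single step between levels $k-1$ and $k$. Two distance facts will do the bulk of the work. First, for any level-$0$ vertex $[0,a]$ and any level-$r$ vertex $[r,c]$, $d([0,a],[r,c])=r$ and the unique geodesic between them is the level-monotone path: any $[0,a]$--$[r,c]$ path of length $r$ must increase level at every step, and monotonicity then forces, at each level, whether a flip is made, so the path is determined by $a$ and $c$. Second, for two level-$0$ vertices $[0,a]$, $[0,b]$, if $h$ is the largest index with $a$ and $b$ differing in bit $\pi(h)$ (and $h=0$ if $a=b$), then $d([0,a],[0,b])=2h$: flipping bit $\pi(h)$ requires an edge between levels $h-1$ and $h$, hence a visit to level $h$, which costs at least $2h$ edges, and going monotonically up to level $h$ (making along the way exactly the flips needed) and then straight back down realises this. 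The mirror statements at level $r$ hold by the level-reversing automorphism of $\BF(r)$ already used above.

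For (1), recall that the partition $U=U^{b}\cup U^{r}$ coming from the diamond representation groups two level-$0$ vertices together exactly when they agree in the bit flipped along the edges between levels $r-1$ and $r$, i.e.\ in bit $\pi(r)$. Hence for $u^{b}_{i},u^{b}_{j}\in U^{b}$ the index $h$ above is at most $r-1$, so any geodesic $P(u^{b}_{i},u^{b}_{j})$ has length at most $2(r-1)$. On the other hand, a path between two level-$0$ vertices that meets level $r$ needs at least $r$ edges to climb from level $0$ to level $r$ and at least $r$ more to return, hence has length at least $2r$; so $P(u^{b}_{i},u^{b}_{j})$ avoids $W$. The case $u^{r}_{i},u^{r}_{j}\in U^{r}$ is identical, and (2) is (1) carried over by the level-reversing automorphism (which exchanges $U\leftrightarrow W$, $U^{b}\leftrightarrow W^{b}$, $U^{r}\leftrightarrow W^{r}$).

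For (3), take $u^{b}=[0,a]\in U^{b}$, $u^{r}=[0,b]\in U^{r}$, $w=[r,c]\in W$. Since $u^{b}$ and $u^{r}$ disagree in bit $\pi(r)$, the second distance fact gives $d(u^{b},u^{r})=2r$. Let $Q$ be the concatenation of the unique monotone geodesic $P(u^{b},w)$ with the unique monotone geodesic $P(w,u^{r})$; it has length $2r$. To see $Q$ is a genuine path, note that its up-branch at level $j$ carries $c$ on the bits $\pi(1),\dots,\pi(j)$ and $a$ on the bits $\pi(j+1),\dots,\pi(r)$, while its down-branch at level $j$ carries the same $c$-values on $\pi(1),\dots,\pi(j)$ but $b$-values on $\pi(j+1),\dots,\pi(r)$; as $a$ and $b$ already differ in bit $\pi(r)$, which occurs among $\pi(j+1),\dots,\pi(r)$ whenever $j<r$, the two branches meet only at $w$. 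Thus $Q$ is a geodesic from $u^{b}$ to $u^{r}$ through $w$. Conversely, any geodesic $R$ from $u^{b}$ to $u^{r}$ through $w$ splits at $w$ into a $u^{b},w$-part and a $w,u^{r}$-part of total length $2r$, each of length at least $d(u^{b},w)=d(w,u^{r})=r$, hence each of length exactly $r$ and therefore the unique monotone geodesic; so $R=Q$, which is the asserted uniqueness and concatenation description of $P_{w}(u^{b},u^{r})$. Finally, every maximal $u^{b},u^{r}$-geodesic is just a geodesic from $u^{b}$ to $u^{r}$ (Corollary~\ref{cor:butterfly-geo}); since $u^{b}$ and $u^{r}$ differ in bit $\pi(r)$, which is flipped only by edges between levels $r-1$ and $r$, such a geodesic must traverse one of those edges and so meets $W$, at exactly one vertex by the splitting argument; distinct vertices of $W$ give distinct geodesics, so there are precisely $|W|=2^{r}$ of them. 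The statement for $W^{b},W^{r},U$ follows by the level-reversing symmetry.

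I expect the only real work to be the bookkeeping in (3)---checking that the concatenation $P(u^{b},w)$ followed by $P(w,u^{r})$ has no repeated internal vertex---and that this, together with (1), rests entirely on getting the description of the partition $U=U^{b}\cup U^{r}$ right: namely that $U^{b}$ and $U^{r}$ are separated exactly by the bit flipped between the two top levels. I would therefore state that fact cleanly at the outset, tying it to the figure and to the decomposition of $\BF(r)$ into two copies of $\BF(r-1)$ after deleting level $0$ that is already invoked in the proof of Lemma~\ref{lem:butterfly-geo0}, and likewise record the level-reversing automorphism explicitly, since (2) and the second half of (3) are then free.
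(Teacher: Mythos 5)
Your proof is correct, and it rests on the same two structural facts the paper uses: deleting level $r$ (resp.\ level $0$) splits $\BF(r)$ into two copies of $\BF(r-1)$ whose level-$0$ (resp.\ level-$r$) vertices are exactly the two colour classes, and the geodesic joining a level-$0$ vertex to a level-$r$ vertex is the unique level-monotone path of length $r$. Where you differ is in self-containment. For parts (1)--(2) the paper simply cites the literature for the convexity of the two $\BF(r-1)$ components of $\BF(r)-W$ and stops; you instead derive the same conclusion from the explicit distance formula $d([0,a],[0,b])=2h$ together with the parity observation that any level-$0$ to level-$0$ path touching level $r$ has length at least $2r$, which amounts to a direct, coordinate-level proof of that convexity. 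For part (3) the paper's entire argument is the one-line remark that $u,w$-geodesics are unique for $u\in U$, $w\in W$; you supply the steps this leaves implicit --- that the concatenation of the two monotone geodesics is a simple path (your level-by-level comparison of the up- and down-branches, which genuinely needs that $u^b$ and $u^r$ differ in the top bit), that every $u^b,u^r$-geodesic meets $W$ in exactly one vertex, and hence that the count is exactly $|W|=2^r$. Your suggestion to state up front that $U^b$ and $U^r$ are separated precisely by the bit flipped between levels $r-1$ and $r$ is well taken: the paper defines this partition only through a figure, and that identification is the one fact on which both (1) and (3) hinge. I see no gaps.
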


\begin{proof} 
$\BF(r) - W$ consists of two components both isomorphic to $\BF(r-1)$, cf.~\cite{Leighton1992, HsuLin08}. As these components are furthermore convex in $\BF(r)$, we get that if either $u^{b}_{i},u^{b}_{j} \in U^b$,  a maximal $u^{b}_{i},u^{b}_{j}$-geodesic does not intersect $W$. Analogously the other assertions hold. The assertion (3) follows from the fact that when $u \in U$ and $w \in W$,	a $u,v$-geodesic is unique.
\end{proof}

Set now 
$$M_{U,W}(\BF(r)) = \{P:\ P\ {\rm is \ a \ maximal}\ x,y{\mbox -}{\rm geodesic,\ either}\ x,y \in U\ {\rm or}\ x,y \in W\}\,.$$
By Lemma~\ref{lem:max-geo-butterfly}, the set of geodesics $M_{U,W}(\BF(r))$ is partitioned into six disjoint subsets as follows. 
\begin{observation}
\label{obs:M-buttefly}
$M_{U,W}(\BF(r))$ partitions into the following sets: 
	\begin{enumerate}
	\item[(i)] 
		$\{P(u^{b}_{i},u^{b}_{j}): u^{b}_{i},u^{b}_{j} \in U^b\}$,
	\item[(ii)] 
		$\{P(u^{r}_{i},u^{r}_{j}): u^{r}_{i},u^{r}_{j} \in U^r\}$,
	\item[(iii)] 
		$\{P(w^{b}_{i},w^{b}_{j}): w^{b}_{i},w^{b}_{j} \in W^b\}$,
	\item[(iv)] 
		$\{P(w^{r}_{i},w^{r}_{j}): w^{r}_{i},w^{r}_{j} \in W^r\}$,
	\item[(v)] 
		$\{P_w(u^b, u^r) : u^b \in U^b, u^r \in U^r, w \in W\}$,
	\item[(vi)]
		$\{P_u(w^b, w^r) : w^b \in W^b, w^r \in W^r, u \in U\}$.
	\end{enumerate}
\end{observation}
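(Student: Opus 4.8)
The plan is to verify the two defining properties of a partition --- that the six listed families cover $M_{U,W}(\BF(r))$ and that they are pairwise disjoint --- and this should follow quickly from Lemma~\ref{lem:max-geo-butterfly} and Corollary~\ref{cor:butterfly-geo}. For coverage I would start with an arbitrary $P \in M_{U,W}(\BF(r))$, say a maximal $x,y$-geodesic with $x \ne y$ (a one-vertex geodesic is not maximal). By definition of $M_{U,W}(\BF(r))$, either $x,y \in U$ or $x,y \in W$, and since $\BF(r)$ is symmetric with respect to levels $0$ and $r$ it suffices to treat $x,y \in U$. Using $U = U^b \cup U^r$ there are three subcases: both of $x,y$ in $U^b$, which puts $P$ in family (i); both in $U^r$, which puts $P$ in family (ii); or one in $U^b$ and the other in $U^r$, say $u^b$ and $u^r$. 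In the last subcase I would invoke Lemma~\ref{lem:max-geo-butterfly}(3): there are exactly $2^r$ maximal $u^b,u^r$-geodesics, while the $2^r$ paths $P_w(u^b,u^r)$, $w \in W$, are pairwise distinct and each maximal by Corollary~\ref{cor:butterfly-geo} (its ends lie at level $0$); hence these paths are all of the maximal $u^b,u^r$-geodesics, so $P = P_w(u^b,u^r)$ for some $w$ and $P$ lies in family (v). The case $x,y \in W$ is identical and yields families (iii), (iv), (vi).

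For disjointness I would note that the unordered pair of end vertices of a geodesic already determines which of the six families it can belong to. Families (i), (ii), (v) all consist of geodesics with both ends in $U$, and among these the decisive distinction is whether the two ends lie both in $U^b$, both in $U^r$, or one in each --- mutually exclusive since $U^b \cap U^r = \emptyset$. Symmetrically families (iii), (iv), (vi) consist of geodesics with both ends in $W$ and are pairwise disjoint for the same reason. Finally no geodesic has both ends in $U$ and both ends in $W$, because $U \cap W = \emptyset$ (levels $0$ and $r$ are distinct for $r \ge 3$); hence families (i), (ii), (v) are disjoint from families (iii), (iv), (vi). If desired one can add that the parametrizations in (v) and (vi) are injective: $P_w(u^b,u^r)$ determines $u^b, u^r$ as its end vertices and $w$ as its unique vertex of level $r$, since a second level-$r$ vertex would make the subpath between the two a proper maximal subgeodesic by Corollary~\ref{cor:butterfly-geo}.

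Essentially nothing here is deep; the families are plainly nonempty for $r \ge 3$ (any geodesic between two distinct vertices of $U^b$ is already maximal by Corollary~\ref{cor:butterfly-geo} and lies in family (i), and similarly for the others), and the one step that needs a moment's care is the mixed-endpoint subcase, where the point is not merely that each $P_w(u^b,u^r)$ is \emph{a} maximal geodesic but that these exhaust \emph{all} of them. That is exactly the counting assertion of Lemma~\ref{lem:max-geo-butterfly}(3) together with Corollary~\ref{cor:butterfly-geo}, so once those are in hand the claim reduces to assembling the pieces already proved.
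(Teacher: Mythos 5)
Your argument is correct and follows the same route as the paper, which states Observation~\ref{obs:M-buttefly} as an immediate consequence of Lemma~\ref{lem:max-geo-butterfly} without writing out a proof. You have simply made explicit the coverage and disjointness checks the paper leaves implicit; in particular, your counting step in the mixed-endpoint case (the $2^r$ distinct maximal paths $P_w(u^b,u^r)$ exhaust the $2^r$ maximal $u^b,u^r$-geodesics of Lemma~\ref{lem:max-geo-butterfly}(3)) is exactly the intended justification for family (v).
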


$M_{U,W}(\BF(r))$ is thus the set of all maximal $x,y$-geodesic, where either $x,y \in U$ or $x,y \in W$. In (\textit{i})-(\textit{iv}) of Observation~\ref{obs:M-buttefly}, given a pair of vertices $x,y \in U$ or $x,y \in W$, there are more than one maximal geodesics between $x$ and $y$ in $M_{U,W}(\BF(r))$. 
Now we  define $M'_{U,W}(\BF(r)) \subset M_{U,W}(\BF(r))$ as follows. First, $M'_{U,W}(\BF(r))$ contains all the geodesics from (\textit{v}) and (\textit{vi}) of Observation~\ref{obs:M-buttefly}. 
Second, for each pair of vertices $u^{b}_{i},u^{b}_{j} \in U^b$ from (\textit{i}),  $u^{r}_{i},u^{r}_{j} \in U^r$ from (\textit{ii}), $w^{b}_{i},w^{b}_{j} \in W^b$ from (\textit{iii}), and $w^{r}_{i},w^{r}_{j} \in W^r$ from (\textit{iv}),
select an arbitrary but fixed geodesic between them and add it to $M'_{U,W}(\BF(r))$. In this way the set $M'_{U,W}(\BF(r))$ is defined. 

For the sake of clarity, we write the members of $M'_{U,W}(\BF(r))$ explicitly below:
\begin{align*} 
M'_{U,W}(\BF(r)) = &
\{P'(u^{b}_{i},u^{b}_{j}): P'(u^{b}_{i},u^{b}_{j}) \ {\rm is \ a \ fixed \ geodesic \ between} \ u^{b}_{i},u^{b}_{j} \in U^b\} \\ 
& \cup 	
\{P'(u^{r}_{i},u^{r}_{j}): P'(u^{r}_{i},u^{r}_{j}) \ {\rm is \ a \ fixed \ geodesic \ between} \ u^{r}_{i},u^{r}_{j} \in U^r\} \\
& \cup 
\{P'(w^{b}_{i},w^{b}_{j}): P'(w^{b}_{i},w^{b}_{j}) \ {\rm is \ a \ fixed \ geodesic \ between} \ w^{b}_{i},w^{b}_{j} \in W^b\} \\
& \cup
\{P'(w^{r}_{i},w^{r}_{j}): P'(w^{r}_{i},w^{r}_{j}) \ {\rm is \ a \ fixed \ geodesic \ between} \ w^{r}_{i},w^{r}_{j} \in W^r\} \\
& \cup 
\{P_w(u^b, u^r) : u^b \in U^b, u^r \in U^r, w \in W\} \\
& \cup
\{P_u(w^b, w^r) : w^b \in W^b, w^r \in W^r, u \in U\}.
\end{align*}
Note that for each pair $u_i, u_j$, for each pair $w_i, w_j$, for each triple $u^b, u^r,w$, and for
each triple $w^b, w^r,u$, the set $M'_{U,W}(\BF(r))$  contains a unique corresponding geodesic. Note also that \[M'_{U,W}(\BF(r)) \subset M_{U,W}(\BF(r)) \subset M(\BF(r)).\]
\subsubsection{Estimating a lower bound for $\gcover(\BF(r))$}
\label{sec:lower-bnd-gcover}

\begin{lemma}
\label{lem:butterfly-geo4}
 If $U$ and $V$ are as above, then 
$$\gcover(BF(r))  \geq  \gcover [M'_{U,W}(BF(r)), U\cup W, BF(r)]\,. $$
\end{lemma}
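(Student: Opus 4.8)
The inequality to establish is
$$\gcover(\BF(r)) \geq \gcover\bigl(M'_{U,W}(\BF(r)),\, U\cup W,\, \BF(r)\bigr).$$
The right-hand side is a restricted cover number: we must cover only the vertices of $U\cup W$, and we may use only geodesics from the restricted family $M'_{U,W}(\BF(r))$. So the plan is to start from an optimal (unrestricted) geodesic cover $\mathcal{C}$ of $\BF(r)$ with $|\mathcal{C}| = \gcover(\BF(r))$, and transform it into a cover of $U\cup W$ using geodesics only from $M'_{U,W}(\BF(r))$, without increasing the cardinality.

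\textbf{Step 1: Reduce to maximal geodesics touching $U\cup W$.} Given $\mathcal{C}$, replace each geodesic $P\in\mathcal{C}$ by a maximal geodesic $\widehat{P}\supseteq P$ (Lemma~\ref{lem:gcover-max-geo}(iii) legitimizes passing to $M(\BF(r))$). Now partition $\mathcal C$ according to how the extended geodesic meets $U\cup W$. The key structural input is Lemma~\ref{lem:butterfly-geo0}: a geodesic meets level $0$ in at most two vertices and level $r$ in at most two vertices. Combined with Lemma~\ref{lem:max-geo-butterfly} (a $u,w$-geodesic with $u\in U$, $w\in W$ is unique, and its maximal extension has both endpoints in $U\cup W$), one checks that every maximal geodesic of $\BF(r)$ that meets $U\cup W$ at all can be "completed" to a maximal geodesic whose endpoints both lie in $U\cup W$: if a maximal geodesic $Q$ contains a vertex $x$ of $U\cup W$ as an interior vertex, then since $x$ has degree $2$ (it sits at level $0$ or $r$) its two neighbours in $Q$ force $Q$ to extend all the way to two degree-$2$ vertices — Corollary~\ref{cor:butterfly-geo} and Corollary~\ref{cor:butterfly-geo1} then say $Q$ already has both endpoints in $U\cup W$. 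If instead $x$ is an endpoint of $Q$, the other endpoint may be an interior-level vertex; but then we can extend $Q$ inside $\BF(r)$ until it reaches a second degree-$2$ vertex, obtaining a maximal $x,y$-geodesic with $x,y\in U\cup W$ that still covers $x$. In either case we obtain a maximal geodesic with both endpoints in $U\cup W$ covering the same portion of $U\cup W$ (and more).

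\textbf{Step 2: Push into $M'_{U,W}$ and bound the count.} After Step 1, each geodesic in our modified cover that contributes to covering $U\cup W$ is a maximal $x,y$-geodesic with $x,y\in U\cup W$, i.e.\ an element of $M_{U,W}(\BF(r))$; by Observation~\ref{obs:M-buttefly} it lies in one of the six classes (i)--(vi). For classes (v) and (vi) we are already in $M'_{U,W}(\BF(r))$, which contains all of those. For classes (i)--(iv), the geodesic joins two vertices $x,y$ both in one of $U^b,U^r,W^b,W^r$; replacing it by the fixed representative $P'(x,y)\in M'_{U,W}(\BF(r))$ keeps $x$ and $y$ covered (that representative is still an $x,y$-geodesic), which is all we need since we only care about covering $U\cup W$. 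Geodesics of $\mathcal C$ that after extension still fail to meet $U\cup W$ are simply discarded. The cardinality never increases at any step, so the resulting subfamily $\mathcal{C}'\subseteq M'_{U,W}(\BF(r))$ covers $U\cup W$ and has $|\mathcal{C}'|\le|\mathcal{C}|=\gcover(\BF(r))$, giving the claimed inequality.

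\textbf{Main obstacle.} The delicate point is Step 1 — verifying that \emph{every} geodesic of the original cover that touches $U\cup W$ can be enlarged to a geodesic of $M_{U,W}(\BF(r))$ covering at least the same vertices of $U\cup W$, in particular when such a vertex appears as an endpoint of an originally non-maximal geodesic whose natural maximal extension has an interior-level endpoint. One must argue, using the degree-$2$ structure at levels $0$ and $r$ together with Corollaries~\ref{cor:butterfly-geo} and~\ref{cor:butterfly-geo1}, that one can always reroute/extend so that the far endpoint also lands in $U\cup W$ without losing the covered vertex; a clean way is to observe that a maximal geodesic covering a degree-$2$ vertex as interior vertex automatically has both endpoints of degree $2$, and a maximal geodesic with a degree-$2$ endpoint $x$ can be replaced by any maximal geodesic from $x$ whose other end is degree $2$ (such exist, e.g.\ any $P_w(\cdot,\cdot)$ or $P'$ through $x$). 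Handling the bipartition bookkeeping ($U^b$ vs $U^r$, etc.) so that the replacement lies in the correct one of the six classes is routine once this is set up.
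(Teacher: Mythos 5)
Your proposal is correct and follows essentially the same route as the paper: pass to maximal geodesics, use Lemma~\ref{lem:butterfly-geo2}, Corollary~\ref{cor:butterfly-geo1} and the classification of Observation~\ref{obs:M-buttefly} to replace each geodesic by a member of $M_{U,W}(\BF(r))$ covering the same vertices of $U\cup W$, and then substitute the fixed representatives to land in $M'_{U,W}(\BF(r))$. The only wording to tighten is in Step~1, where you speak of ``extending'' an already maximal geodesic whose far endpoint lies at an interior level --- what is actually needed is the \emph{replacement} you describe in your final paragraph, which is exactly the paper's argument.
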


\begin{proof}
Set $G = \BF(r)$. To prove the lemma, we are going to show that 
\begin{align*} 
\gcover(G)  & \geq \gcover[M(G), U\cup W, G] \\
& = \gcover[M_{U,W}(G), U\cup W, G] \\
& = \gcover[M'_{U,W}(G), U\cup W, G]\,. 		
\end{align*}
	By Lemma~\ref{lem:gcover-max-geo}, we get $\gcover(G)  \geq \gcover[M(G), U\cup W, G]$.  By Observation~\ref{obs:M-buttefly} and the definition of $M'_{U,W}(G)$, $\gcover[M_{U,W}(G), U\cup W, G] = \gcover[M'_{U,W}(G), U\cup W, G]$. 	Next we prove that $ \gcover[M(G), U\cup W, G] = \gcover[M_{U,W}(G), U\cup W, G]$.
	
 Since $M_{U,W}(G) \subseteq M(G)$, by Lemma~\ref{lem:gcover-max-geo}, we get \[\gcover[M(G), U\cup W, G] \leq \gcover[M_{U,W}(G), U\cup W, G].\] 	Now it is enough to prove that  \[\gcover[M(G),U\cup W, G] \geq \gcover[M_{U,W}(G),U\cup W, G].\]
	By Lemma~\ref{lem:butterfly-geo2}, a geodesic covers at most three vertices of $U \cup W$. If $P$ is a member of $M(G)$ such that $P$ covers three vertices of $U \cup W$ in $\BF(r)$, then by Corollary~\ref{cor:butterfly-geo1}, $P \in M_{U,W}(G)$. On the other hand, if $P$ is a member of $M(G)$ covering two vertices $v_1$ and $v_2$ of $U \cup W$, then by Observation~\ref{obs:M-buttefly} there exists a geodesic $Q$ of $M_{U,W}(G)$ such that $Q$ covers both vertices $v_1$ and $v_2$. Hence, $\gcover[M(G),U\cup W, G] \geq \gcover[M_{U,W}(G),U\cup W, G]$.	
\end{proof}

Let us consider two sets $X$ and $Y$ where $X= X^{b} \cup X^{r}$,  $Y= Y^{b} \cup Y^{r}$,  	
$X^{b} = \{x^b_{1}, x^b_{2},\dots, x^b_{2^{r-1}}\}$,
$X^{r} = \{x^r_{1}, x^r_{2},\dots, x^r_{2^{r-1}}\}$,
$Y^{b} = \{y^b_{1}, y^b_{2},\dots, y^b_{2^{r-1}}\}$,
and
$Y^{r} = \{y^r_{1}, y^r_{2},\dots, y^r_{2^{r-1}}\}$. Now we define a complete bipartite graph $G'$ with the bipartition $X, Y$. Let us further define another complete bipartite graph $G''$ with the bipartition $X_0 = X \cup  \{x_0\}, Y_0 = Y \cup \{y_0\}$. The graphs $G'$ and $G''$ are presented in Fig.~\ref{fig:comp-bipartite-XY}. 

\begin{figure}[ht!]
	\centering
	\includegraphics[scale=0.42]{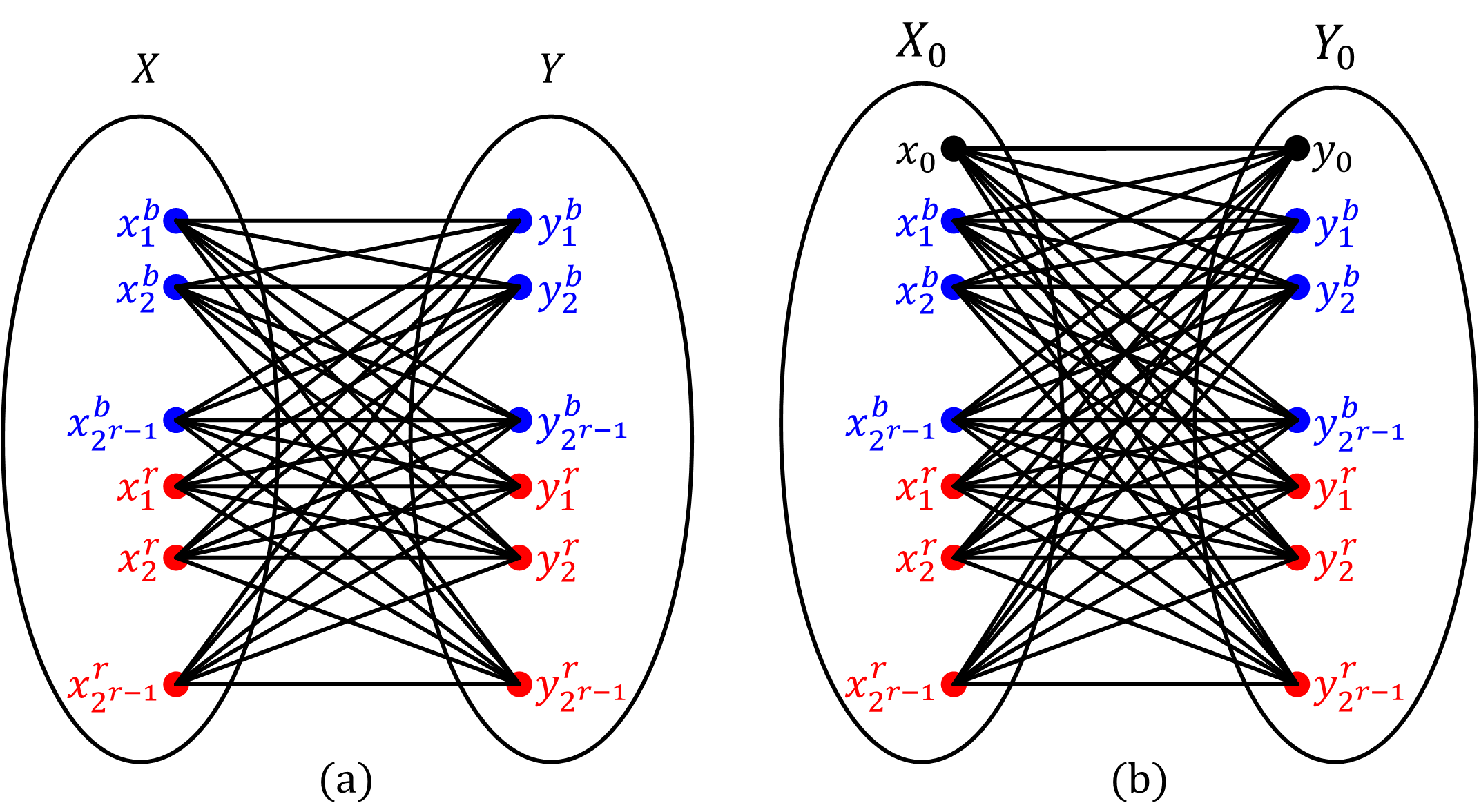}
	\caption{
	(a)  The complete bipartite graph $G'$. (b) The complete bipartite graph $G''$.}
	\label{fig:comp-bipartite-XY}
\end{figure}

\begin{lemma}
	\label{lem:butterfly-geo6}  
If $U$ and $V$ are as above, then 
$$\gcover(M'_{U,W}(\BF(r)), U\cup W, \BF(r)) \geq \lceil(2/3)2^r\rceil\,.$$  		
\end{lemma}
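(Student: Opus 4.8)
The plan is to reduce the covering problem for $M'_{U,W}(\BF(r))$ on the vertex set $U \cup W$ to a purely combinatorial covering problem on the complete bipartite graph $G''$ introduced just before the lemma, and then invoke Proposition~\ref{prop:gcover-com-bipartite}. First I would set up a correspondence between the $2^r$ vertices of $U$ and the $2^r$ vertices of $W$ on one side, and the vertex set $X_0 = X \cup \{x_0\}$, $Y_0 = Y \cup \{y_0\}$ of $G''$ on the other: namely, identify $U^b$ with $X^b$, $U^r$ with $X^r$, $W^b$ with $Y^b$, $W^r$ with $Y^r$, where the subscripts follow the diamond-representation ordering. The two extra vertices $x_0$ and $y_0$ are auxiliary "dummy" vertices whose role is to absorb the geodesics of types (i)--(iv) of Observation~\ref{obs:M-buttefly}, i.e.\ those whose two endpoints lie both in $U$ or both in $W$.

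The key observation driving the reduction is that, by Lemma~\ref{lem:butterfly-geo2} and Corollary~\ref{cor:butterfly-geo1}, each geodesic in $M'_{U,W}(\BF(r))$ meets $U \cup W$ in at most three vertices, and if it meets exactly three then those three are its degree-$2$ endpoints together with one more, forced to be of the shape $P_w(u^b,u^r)$ or $P_u(w^b,w^r)$ from types (v)--(vi) --- one endpoint in $U^b$-or-$W^b$, one in $U^r$-or-$W^r$, and the middle vertex the unique pass-through vertex at the opposite end level. So I would argue: given any cover $\mathcal{C}$ of $(M'_{U,W}(\BF(r)), U\cup W, \BF(r))$, build a cover of $G''$ of the same cardinality by replacing each geodesic $P$ with the path in $G''$ on the images of $P \cap (U\cup W)$, routed through $x_0$ or $y_0$ as needed. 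Concretely, a type-(i)/(ii) geodesic with endpoints in $U^b$ (resp.\ $U^r$) maps to the length-$2$ path through $x_0$ joining the two image vertices in $X^b$ (resp.\ $X^r$); a type-(iii)/(iv) geodesic maps similarly through $y_0$; a type-(v) geodesic $P_w(u^b,u^r)$ maps to the length-$2$ path $x^b \, y \, x^r$ where $y \in Y$ is the image of $w$; a type-(vi) geodesic maps to the length-$2$ path $y^b \, x \, y^r$. One checks every such image is indeed a geodesic (maximal, length $2$) of $G''$, and that a vertex of $U\cup W$ is covered by $P$ iff its image in $X \cup Y$ is covered by the image path. Hence $\gcover(M'_{U,W}(\BF(r)), U\cup W, \BF(r)) \ge \gcover(X\cup Y, G'') \ge \gcover(G'')$ once one also observes $x_0,y_0$ need not be covered --- or, more cleanly, bound below by the covering number of $G'$ on its full vertex set, which by Proposition~\ref{prop:gcover-com-bipartite} (applied with $r$ there equal to $2^{r-1}$, noting $2^{r-1}\ge 2$ for $r\ge 3$) equals $\lceil (2/3)2^{r}\rceil$, since $G'$ is $K_{2^{r-1},2^{r-1}}$ and $2\cdot 2^{r-1} = 2^r$.

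The delicate point, and the step I expect to be the main obstacle, is verifying that the map from $\BF(r)$-geodesics to $G''$-paths does not "lose" covering power --- i.e.\ that it is genuinely cardinality-preserving and surjective enough onto the relevant covers. The subtlety is the auxiliary vertices $x_0, y_0$: a type-(i) geodesic covering $u^b_i, u^b_j$ corresponds to a $G''$-path that also covers $x_0$, so I must make sure the reduction target is "cover $X\cup Y$" (not $X_0\cup Y_0$) and that adding $x_0,y_0$ to $G''$ does not decrease the covering number of $X\cup Y$ below that of $G'$. This is where $G''$ rather than $G'$ is needed: in $G'=K_{2^{r-1},2^{r-1}}$ there is no length-$2$ geodesic joining two vertices on the same side, so type-(i)--(iv) geodesics have no $G'$-analogue; introducing $x_0$ (adjacent to all of $Y$) and $y_0$ (adjacent to all of $X$) supplies exactly the missing "same-side" geodesics of length $2$. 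I would then show $\gcover(G'', X\cup Y, G'') \ge \gcover(G') = \lceil(2/3)2^r\rceil$ by the same maximal-geodesic-has-length-$2$ counting argument as in Proposition~\ref{prop:gcover-com-bipartite}: each geodesic of $G''$ covers at most two vertices of one part and at most one of the other, hence at most $2$ vertices of each of $X$ and $Y$... actually at most $3$ vertices of $X\cup Y$ total with at least one in each part unless it passes through a dummy, and a careful tally gives that $\lceil(2/3)2^r\rceil$ geodesics are necessary to cover all $2^r$ vertices of $X$ (equivalently of $Y$), each geodesic contributing at most two. Packaging this counting cleanly --- making sure the dummy vertices cannot be exploited to beat the $2/3$ ratio --- is the technical heart of the argument; everything else is bookkeeping along the correspondence set up in the first two paragraphs.
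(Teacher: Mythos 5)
Your proposal follows essentially the same route as the paper's proof: map the six types of geodesics of $M'_{U,W}(\BF(r))$ to length-$2$ maximal geodesics of the augmented complete bipartite graph $G''$, argue that the dummy vertices cannot help so that the bound reduces to $\gcover(G')$, and invoke Proposition~\ref{prop:gcover-com-bipartite}. Only two bookkeeping slips: a type-(i)/(ii) geodesic (both endpoints in $U$, hence mapped into the part $X$) must be routed through the dummy $y_0$ on the \emph{opposite} side, not $x_0$ (dually for types (iii)/(iv)); and $G'$ is $K_{2^r,2^r}$, not $K_{2^{r-1},2^{r-1}}$ (each part is $X^b\cup X^r$ with $|X^b|=|X^r|=2^{r-1}$), so Proposition~\ref{prop:gcover-com-bipartite} applied with parameter $2^r$ gives $\lceil(2/3)2^r\rceil$ directly, which is cleaner than the per-part counting you sketch at the end (counting only the $\le 2$ vertices covered in $X$ would yield just $2^{r-1}$).
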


\begin{proof}  
Set $G = \BF(r)$ and let $G'$ and $G''$ be the complete bipartite graphs as defined above. To prove the lemma we claim that the following holds: 
\begin{equation*} 
	\begin{split}
		\gcover(M'_{U,W}(G), U\cup W, G) 
		& \geq  \gcover(M(G''), X\cup Y, G'') \\
		& = \gcover(M(G'), X\cup Y, G') \\
		& = \gcover(G') \\
		& \geq \lceil(2/3)2^r\rceil\,. \\		
	\end{split}
\end{equation*}

In order to prove the inequality $\gcover(M'_{U,W}(G), U\cup W, G) \geq  \gcover(M(G''), X\cup Y, G'')$, we define an 1-1 mapping $f\colon M'_{U,W}(G) \to M(G'')$. This mapping $f:P \mapsto f(P)$ is defined as follows. 
\begin{enumerate}
	\item 
	Each $P'(u^b_i,u^b_j)$ of $M'_{U,W}(G)$, where $u^b_i, u^b_j \in U^b$, is mapped to geodesic $x^b_{i}y_{0}x^b_{j} \in M(G'')$, where $x^b_i, x^b_j \in X^b$.
	\item
	Each $P'(u^r_i,u^r_j)$ of $M'_{U,W}(G)$, where $u^r_i, u^r_j \in U^r$, is mapped to geodesic $x^r_{i}y_{0}x^r_{j} \in M(G'')$, where $x^r_i, x^r_j \in X^r$.
	\item 
	Each $P'(w^b_i,w^b_j)$ of $M'_{U,W}(G)$, where  $w^b_i, w^b_j \in W^b$, is mapped to geodesic $y^b_{i}x_{0}y^b_{j} \in M(G'')$, where  $y^b_i, y^b_j \in Y^b$.
	\item
	Each $P'(w^r_i,w^r_j)$ of $M'_{U,W}(G)$, where $w^r_i, w^r_j \in W^r$, is mapped to geodesic $y^r_{i}x_{0}y^r_{j} \in M(G'')$, where $y^r_i, y^r_j \in Y^r$.
	\item 
	Each $P_{w_{k}}(u^b_{i}, u^r_{j})$ of $M'_{U,W}(G)$, where $u^b_{i} \in U^b, u^r_{j} \in U^r, w_{k} \in W$, is mapped to geodesic $x^{b}_{i}{y_{k}}x^{r}_{j} \in M(G'')$ where $x^b_{i} \in X^b, x^r_{j} \in X^r, y_{k} \in Y$.
	\item
	Each $P_{u_{k}}(w^b_{i}, w^r_{j})$ of $M'_{U,W}(G)$, where $w^b_{i} \in W^b, w^r_{j} \in W^r, u_{k} \in U$, is mapped to geodesic $y^{b}_{i}{x_{k}}y^{r}_{j} \in  M(G'')$ where $y^b_{i} \in Y^b, y^r_{j} \in Y^r, x_{k} \in X$.
\end{enumerate}
If $S$ is a subset set of $M'_{U,W}(G)$ in $\BF(r)$, then let $f(S)$ be a subset of $M(G'')$ defined by $f(S)  = \{f(P) : P \in S\}$. By the mapping defined above, if the geodesics of $S$ cover $U \cup W$ of $BF(r)$, then the geodesics of $f(S)$ cover $X\cup Y$ of $G''$. See Fig.~\ref{fig:Butterfly-bipartite-XY}. Since $|S| = |f(S)|$, by applying Lemma~\ref{lem:gcover-max-geo}, we get the inequality \begin{align*}\gcover(M'_{U,W}(G), U\cup W, G) \geq  \gcover(M(G''),  X\cup Y, G'').\end{align*} 

\begin{figure}[ht!]
	\centering
	\includegraphics[scale=0.45]{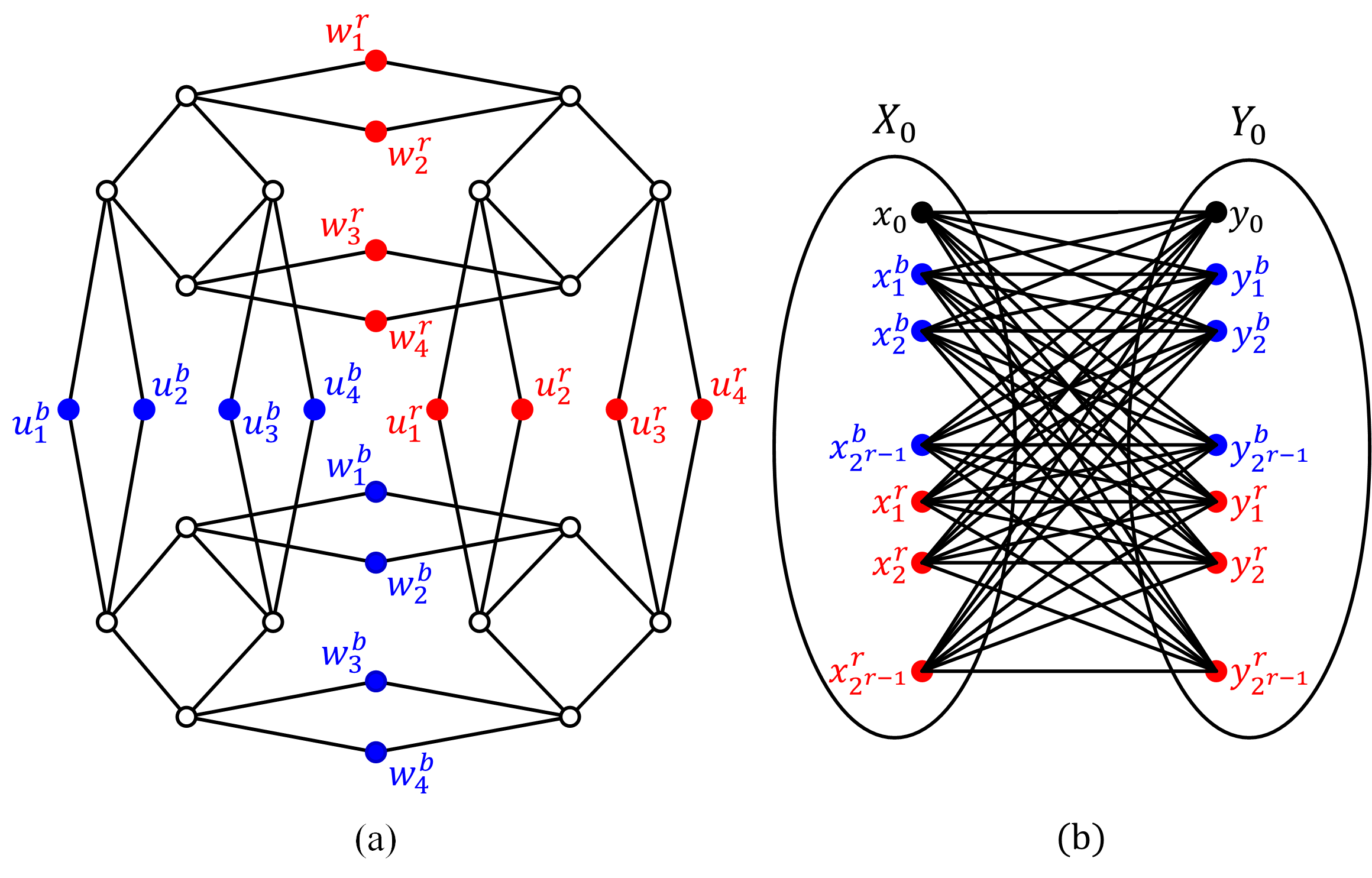}
	\caption{(a) $G = \BF(3)$.  (b) Complete bipartite graph $G''$. If a set $S$ of geodesics of $BF(r)$ cover $U \cup W$ of $BF(r)$, then the geodesics of $f(S)$ cover $X\cup Y$ of $G''$.}
	\label{fig:Butterfly-bipartite-XY}
\end{figure}

Next we shall prove that $\gcover(M(G''), X \cup Y, G'') =  \gcover(M(G'), X \cup Y, G')$. By Lemma~\ref{lem:gcover-max-geo}, we get the inequality \begin{align*}\gcover(M(G''), X \cup Y, G'') \leq  \gcover(M(G'), X \cup Y, G')\end{align*} because $M(G'')$ is superset of $M(G')$. 
Now we prove the reverse inequality. If  $P \in M(G'')$ and $V(P) \subseteq X \cup Y$, then $P \in M(G')$. In other words, if a subset $S$  of $M(G'')$ covers $X \cup Y$, there exists a subset $S'$ of $M(G')$ such that $S'$ covers $X \cup Y$ (in case $x_{0} \in S'$, then $x_0$ is replaced with any other vertex of $X_{0}-S'$) and $|S| = |S'|$.
Thus, $\gcover(M(G'), X \cup Y, G') \leq \gcover(M(G''), X \cup Y, G'')$.

Since $G'(U,V,E')$ is a complete bipartite graph $K_{2^r,2^r}$, by Lemma~\ref{lem:gcover-max-geo} and Proposition~\ref{prop:gcover-com-bipartite}, we infer that $\gcover(M(G'), X \cup Y, G') = \gcover(G') \geq \lceil(2/3)2^r\rceil$.
\end{proof}

Combining Lemma~\ref{lem:butterfly-geo4} with Lemma~\ref{lem:butterfly-geo6}, we have: 

\begin{lemma} 
	\label{lem:butter-gcover-inequ2}
	If $r\ge 2$, then $\gcover(\BF(r)) \geq \lceil(2/3)2^r\rceil$. 
\end{lemma}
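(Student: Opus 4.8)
The plan is to chain the two inequalities proved immediately above. By Lemma~\ref{lem:butterfly-geo4},
\[
\gcover(\BF(r)) \;\geq\; \gcover(M'_{U,W}(\BF(r)),\, U\cup W,\, \BF(r)),
\]
which has already done the real work of replacing the goal of covering all of $V(\BF(r))$ by the weaker goal of covering only the degree-two vertices $U\cup W$, and simultaneously of replacing $M(\BF(r))$ by the concrete, fully enumerated family $M'_{U,W}(\BF(r))$. By Lemma~\ref{lem:butterfly-geo6},
\[
\gcover(M'_{U,W}(\BF(r)),\, U\cup W,\, \BF(r)) \;\geq\; \lceil (2/3)2^r\rceil,
\]
obtained through the $1$-$1$ correspondence with maximal geodesics of $K_{2^r,2^r}$ and Proposition~\ref{prop:gcover-com-bipartite}. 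Transitivity of $\geq$ then gives $\gcover(\BF(r)) \geq \lceil (2/3)2^r\rceil$, as required.

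Since the two ingredient lemmas are formulated in the regime where $\BF(r)$ is defined, namely $r\geq 3$, the argument as stated covers all such $r$; the boundary value $r=2$ appearing in the statement is handled either by extending the butterfly construction to $r=2$ verbatim (the structural lemmas of Section~\ref{sec:preliminaries} and the enumeration of Lemma~\ref{lem:max-geo-butterfly} go through unchanged) or by inspecting $\BF(2)$ directly: there $U\cup W$ consists of eight vertices of degree $2$, and by Lemma~\ref{lem:butterfly-geo2} any geodesic meets at most three of them, forcing at least $\lceil 8/3\rceil = 3 = \lceil (2/3)2^2\rceil$ geodesics.

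There is essentially no obstacle remaining at this stage. All the combinatorial difficulty — the bounds on how few degree-two vertices a single geodesic can absorb (Lemma~\ref{lem:butterfly-geo0}, Lemma~\ref{lem:butterfly-geo2}, and Corollaries~\ref{cor:butterfly-geo} and~\ref{cor:butterfly-geo1}), the enumeration of all maximal geodesics with endpoints in $U\cup W$, and the reduction to a complete bipartite graph — has been discharged in the preceding results. What is left here is pure bookkeeping: repeated application of the monotonicity statements in Lemma~\ref{lem:gcover-max-geo} and transitivity of the inequalities.
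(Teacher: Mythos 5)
Your proposal is correct and is essentially identical to the paper's argument: the paper derives this lemma by simply combining Lemma~\ref{lem:butterfly-geo4} with Lemma~\ref{lem:butterfly-geo6} via transitivity. Your additional remark on the $r=2$ boundary case (counting the eight degree-$2$ vertices against the three-per-geodesic bound of Lemma~\ref{lem:butterfly-geo2}) is a sensible supplement that the paper leaves implicit.
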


\subsection{An upper bound for the geodesic cover number of butterfly networks}
\label{subsec:upperbound-gcover-butterfly}

In this section, our aim is to  construct a geodesic cover of cardinality  $\lceil (2/3)2^{r}\rceil$ for $\BF(r)$. 
In $\BF(r)$, there are $2^r$ rows and $r+1$ levels. 
The set of vertices at level $0$ is $U= \{u_1,\ldots,u_{2^r}\}$, the set of vertices at level $r$ in $W = \{w_1,\ldots,w_{2^r}\}$. In this section, the order of vertices in $U$ and $W$ are with respect to normal representation of $\BF(r)$. (In the previous section, the order was with respect to diamond representation.) Refer to Fig.~\ref{fig:Butterfly-4D-nor-9.1}. 
The set $U$ is further partitioned into $A$ and $B$, and $W$ is partitioned into $C$ and $D$, see~Fig.~\ref{fig:Butterfly-4D-nor-9.1}. These sets are formally defined as follows:
\begin{align*}
A & = \{[0,1], [0,2],\ldots, [0,2^{r-1}]\}, \\
B & = \{[0,2^{r-1}+1], [0,2^{r-1}+2], \ldots, [0,2^{r}]\}, \\
C & = \{[r,1], [r,2],\ldots, [r,2^{r-1}]\}, \\
D & = \{[r,2^{r-1}+1], [r,2^{r-1}+2], \ldots, [r,2^{r}]\}.
\end{align*}
The next important step is to color the vertices of $\BF(r)$ in two colors---red and blue. In $U$, the vertex $[0,i]$ is colored in red if $i$ is even, and is colored in blue otherwise. In $W$, the vertices of $C$ are colored in red and the vertices of $D$ are colored in blue. See~Fig.~\ref{fig:Butterfly-4D-nor-9.1} again.

We concentrate only on diametrals of $\BF(r)$ because we shall a construct geodesic cover of $\BF(r)$ in terms of diametrals. Thus, it is necessary to study the properties of diametrals of $\BF(r)$. Throughout this section, $P_{v}(u,w)$ denotes a diametral in $\BF(r)$ such that $u$ and $w$ are the end vertices of $P_{v}(u,w)$ and $v$ is the middle vertex of $P_{v}(u,w)$. Now onward, we only consider $\BF(r)$ with colored vertices as described before, see Fig.~\ref{fig:Butterfly-diamond-normal}.

\begin{property}
	\label{pro:property1}
	If a vertex $v$ is at level $0$ $($level $r)$ and vertices $u,w$ at level $r$ $($level $0)$ are in opposite colors, then there exists a unique diametral $P_{v}(u,w)$ in $\BF(r)$.  
\end{property}

\begin{proof}
	The structural details of two different representations of $\BF(r)$ which are illustrated in Fig.~\ref{fig:Butterfly-diamond-normal} are explained in \cite{Manuel2008}. By Lemma \ref{lem:max-geo-butterfly}, a geodesic $P(x,y)$ between a vertex $x$ at level $0$ and a vertex $y$ at level $r$ is unique in $\BF(r)$ and the length of $P(x,y)$ is $r$.
	From the diamond representation of $\BF(r)$ in Fig.~\ref{fig:Butterfly-diamond-normal} (b),  whenever the vertex $v$ at level $0$ $($level $r)$ is in any color and vertices $u,w$  at level $r$ $($level $0)$ are in opposite colors, there exists a diametral  $P_{v}(u,w)$ of $\BF(r)$ between $u$ and $w$ passing through $v$.   Since $P(u,v)$ and $P(v,w)$ are unique, $P_{v}(u,w)$ is also unique.
\end{proof}

\begin{figure}[ht!]
	\centering
	\includegraphics[scale=0.33]{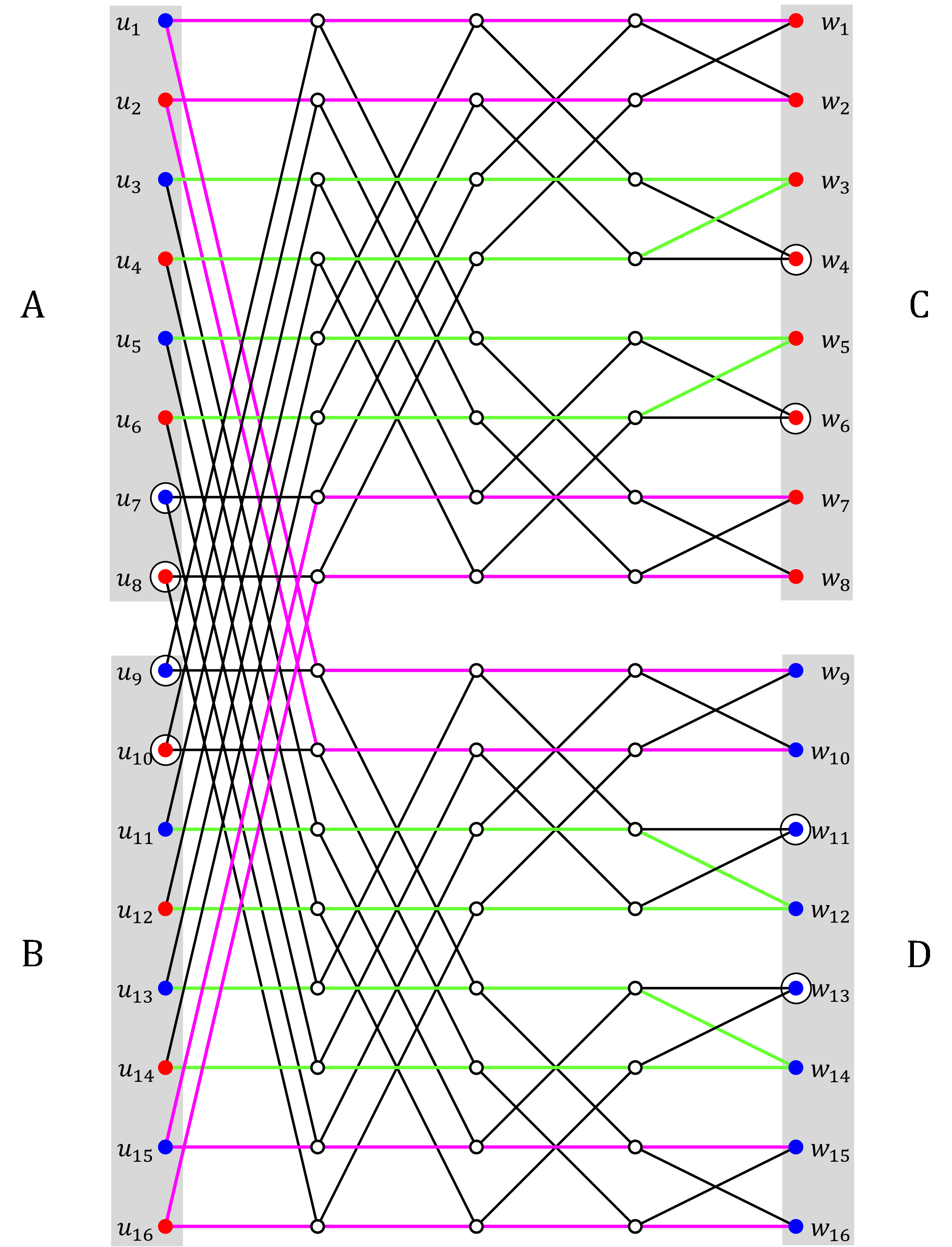}
	\caption{
In Stage $1$, geodesics $P_{u_{1}}(w_{1},w_{9})$, $P_{u_{2}}(w_{2},w_{10})$ and 
$P_{u_{16}}(w_{16},w_{8})$, $P_{u_{15}}(w_{15},w_{7})$ are constructed (pink).   
In Stage $2$, geodesics  $P_{w_{3}}(u_{3},u_{4})$, $P_{w_{5}}(u_{5},u_{6})$ and 
$P_{w_{14}}(u_{14},u_{13})$, $P_{w_{12}}(u_{12},u_{11})$  are constructed (green).
		The vertices which are not covered in previous stages are circled. They are then covered in Stage $3$.}
	\label{fig:Butterfly-4D-nor-9.1}
\end{figure}

Thus, by Property \ref{pro:property1}, in order to construct a diametral path $P_{v}(u,w)$ in $\BF(r)$, it is enough to identify the middle vertex $v$ at level $0$ (level $r$) in any color and end vertices $\{u,w\}$ at level $r$ (level $0$) in opposite colors.

The construction of a geodesic cover of $\BF(r)$ is carried out in three stages, cf.\ Fig.~\ref{fig:Butterfly-4D-nor-9.1}.

\medskip\noindent
{\bf Stage 1}

\medskip\noindent
In the first stage, the following diametrals are constructed:
\begin{enumerate}
	\item 	
	$P_{u_i}(w_i,w_{i+2^{r-1}})$, where $i \in[2^{r-3}]$ and $u_i \in A$. 
	\item 
	$P_{u_i}(w_i,w_{i-2^{r-1}})$, where $i \in \{2^r, 2^{r}-1, \ldots, 7\cdot 2^{r-3}+1\}$ and $u_i \in B$.
\end{enumerate}

\medskip\noindent
{\bf Stage 2}

\medskip\noindent
In the second stage, the following diametrals are constructed:
\begin{enumerate}
	\item 
	$P_{w_i}(u_i,u_{i+1})$, where $i \in \{2^{r-3}+1, 2^{r-3}+3, \ldots, 3\cdot 2^{r-3}\}$ and $w_i \in C$. 
	\item
	$P_{w_i}(u_i,u_{i-1})$, where $i \in \{7\cdot 2^{r-3}, 7\cdot 2^{r-3}-2, \ldots, 5\cdot 2^{r-3}+1\}$ and $w_i \in D$. 
\end{enumerate}
The vertices not covered by the diametrals during the first two stages are:
\begin{enumerate}
	\item 
	$A' = \{u_i \in A:\ i \in \{3\cdot 2^{r-3}+1, 3\cdot 2^{r-3}+2, \ldots, 2^{r-1}\}\}$. 
	\item
	$B' = \{u_i \in B:\ i \in\{2^{r-1}+1, 2^{r-1}+2, \ldots, 5\cdot 2^{r-3}\}\}$.
	\item
	$C' = \{w_i \in C:\ i \in\{2^{r-3}+2, 2^{r-3}+4, \ldots, 3\cdot 2^{r-3}\}\}$.
	\item
	$D' = \{w_i \in D:\ i \in 5\cdot 2^{r-3}+1, 5\cdot 2^{r-3}+3, \ldots, 7\cdot 2^{r-3}\}\}$. 
\end{enumerate}
Note that $A'$ has equal number of red and blue vertices and that the same holds $B'$. Also, $C'$ has only red vertices, while $D'$ has only blue vertices. Refer to Fig.~\ref{fig:Butterfly-4D-nor-9.1}. There are $2^{r-3}$ red vertices in $A'\cup B'$, $2^{r-3}$ blue vertices in $A'\cup B'$, $2^{r-3}$ red vertices in $C'$, and $2^{r-3}$ blue vertices in $D'$.

After Stage 1 and Stage 2, all the internal vertices (that is, the vertices from $V(G)-(U \cup W)$) are covered. Few vertices of $U \cup W$ remains uncovered, leading to Stage 3.

\eject

\noindent
{\bf Stage 3}

\medskip\noindent
We first regroup and rename the vertices of $A'$ and $B'$ into the following sets: 
\begin{enumerate}
	\item 
	$U^{r} = \{u^{r}_i:\ i \in[2^{r-3}]\}$ - the red vertices of $A'$ and $B'$.
	\item 
	$U^{b} = \{u^{b}_i:\ i \in [2^{r-3}]\}$ - the blue vertices of $A'$ and $B'$. 
	\item 
	$W^{r} = \{w^{r}_i:\ i \in [2^{r-3}]\}$ - the vertices of $C'$.
	\item 
	$W^{b} = \{w^{b}_i:\ i \in [2^{r-3}]\}$ - the vertices of $D'$.
\end{enumerate}
(The sets $W^{r}$ and $W^{b}$ are thus obtained by renaming the vertices of $C'$ and $D'$.) 
The next step in Stage 3 is to partition the vertices of $U^{r}$, $U^{b}$, $W^{r}$, and $W^{b}$ into four subsets. For a fixed $r \geq 5$, let $\ell= \lfloor \frac{2^{r-3}}{3} \rfloor$. Then  $2^{r-3} = 3\cdot \ell + 1$ or $2^{r-3} = 3\cdot \ell + 2$. 

\medskip\noindent
{\bf Case 1}: $2^{r-3} = 3\cdot \ell + 1$.\\
Recall that $U^{r}$ contains $2^{r-3}$ red vertices. The set $U^{r}$ is further partitioned into three subsets each subset containing $\ell$ vertices and one subset containing the remaining vertex $u^{r}_{x}$ of $U^{r}$. The other three sets $U^{b}$, $W^{r}$, and $W^{b}$ are also partitioned similarly. The partition of $U^{r}$, $U^{b}$, $W^{r}$, and $W^{b}$ and their subpartitions are:
\newline
\newline
\noindent
{\red  
	\begin{minipage}[c]{0.5\textwidth}
		\centering
		\begin{tabular}{ |p{.5cm}|p{.7cm} p{.7cm} p{.4cm} p{.6cm}|}
			\hline
			\multicolumn{5}{|c|}{$U^{r}$} \\
			\hline
			$U^{r}_1$ &  $u^{r}_{1},$ &   $u^{r}_{2},$ & $\ldots,$ &  $u^{r}_{\ell}$\\
			\hline
			$U^{r}_2$ &  $u^{r}_{\ell+1},$ &   $u^{r}_{\ell+2}$, & $\ldots,$ &  $u^{r}_{2\ell}$\\
			\hline
			$U^{r}_3$ &  $u^{r}_{2\ell+1},$ &   $u^{r}_{2\ell+2}$, & $\ldots,$ &  $u^{r}_{3\ell}$\\
			\hline
			$U^{r}_4$ &  $u^{r}_{x}$ &&&\\
			\hline
		\end{tabular}
	\end{minipage}
	\begin{minipage}[c]{0.5\textwidth}
		\centering
		\begin{tabular}{ |p{.5cm}|p{.7cm} p{.7cm} p{.4cm} p{.6cm}|}
			\hline
			\multicolumn{5}{|c|}{$W^{r}$} \\
			\hline
			$W^{r}_1$ &  $w^{r}_{1},$ &   $w^{r}_{2}$, & $\ldots$, &  $w^{r}_{\ell}$\\
			\hline
			$W^{r}_2$ &  $w^{r}_{\ell+1},$ &   $w^{r}_{\ell+2}$, & $\ldots$, &  $w^{r}_{2\ell}$\\
			\hline
			$W^{r}_3$ &  $w^{r}_{2\ell+1},$ &   $w^{r}_{2\ell+2}$, & $\ldots$, &  $w^{r}_{3\ell}$\\
			\hline
			$W^{r}_4$ &  $w^{r}_{x}$ &&&\\
			\hline
		\end{tabular}
	\end{minipage}
}
\newline
\noindent
{\blue
	\begin{minipage}[c]{0.5\textwidth}
		\centering
		\begin{tabular}{ |p{.5cm}|p{.7cm} p{.7cm} p{.4cm} p{.6cm}|}
			\hline
			\multicolumn{5}{|c|}{$U^{b}$} \\
			\hline
			$U^{b}_1$ &  $u^{b}_{1},$ &   $u^{b}_{2}$, & $\ldots$, &  $u^{b}_{\ell}$\\ [0.5ex] 
			\hline
			$U^{b}_2$ &  $u^{b}_{\ell+1},$ &   $u^{b}_{\ell+2}$, & $\ldots$, &  $u^{b}_{2\ell}$\\
			\hline
			$U^{b}_3$ &  $u^{b}_{2\ell+1},$ &   $u^{b}_{2\ell+2}$, & $\ldots$, &  $u^{b}_{3\ell}$\\
			\hline
			$U^{b}_4$ &  $u^{b}_{x}$ &&&\\
			\hline
		\end{tabular}
	\end{minipage}
	\begin{minipage}[c]{0.5\textwidth}
		\centering
		\begin{tabular}{ |p{.5cm}|p{.7cm} p{.7cm} p{.4cm} p{.6cm}|}
			\hline
			\multicolumn{5}{|c|}{$W^{b}$} \\
			\hline
			$W^{b}_1$ &  $w^{b}_{1},$ &   $w^{b}_{2}$, & $\ldots$, &  $w^{b}_{\ell}$\\
			\hline
			$W^{b}_2$ &  $w^{b}_{\ell+1},$ &   $w^{b}_{\ell+2}$, & $\ldots$, &  $w^{b}_{2\ell}$\\
			\hline
			$W^{b}_3$ &  $w^{b}_{2\ell+1},$ &   $w^{b}_{2\ell+2}$, & $\ldots$, &  $w^{b}_{3\ell}$\\
			\hline
			$W^{b}_4$ &  $w^{b}_{x}$ &&&\\
			\hline
		\end{tabular}
	\end{minipage}
}
\newline
\newline
\noindent
The motivation to partition $U^{r}$, $U^{b}$, $W^{r}$, and $W^{b}$ into three subsets of equal cardinality $\ell$ is illustrated in Fig.~\ref{fig:motivation-base-6x6}. 

\begin{figure}[ht!]
	\centering
	\includegraphics[scale=0.5]{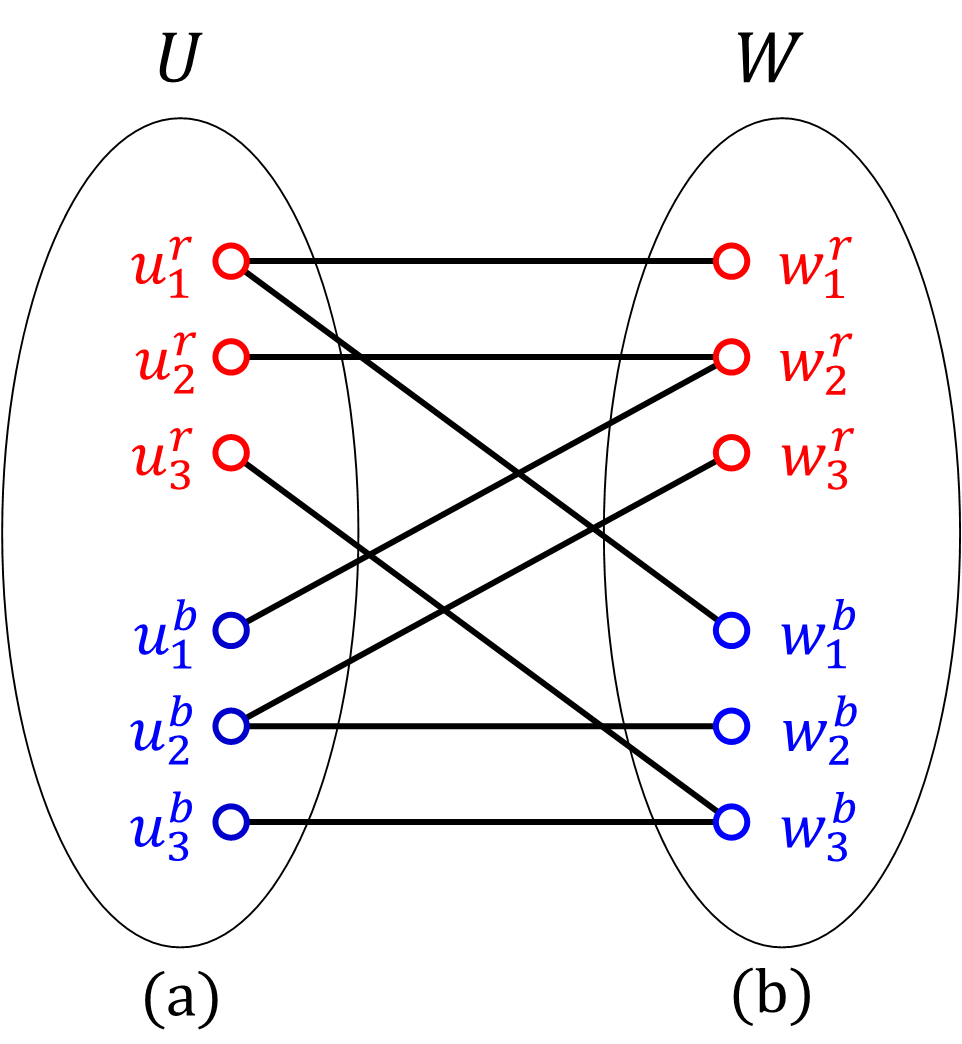}
	\caption{Here $U = \{u^{r}_{1}, u^{r}_{2}, u^{r}_{3}, u^{b}_{1}, u^{b}_{2}, u^{b}_{3}\}$ and $W = \{w^{r}_{1}, w^{r}_{2}, w^{r}_{3}, w^{b}_{1}, w^{b}_{2}, w^{b}_{3}\}$, where $u^{r}_{1}, u^{r}_{2}, u^{r}_{3}, w^{r}_{1}, w^{r}_{2}, w^{r}_{3}$ are red vertices, and $u^{b}_{1}, u^{b}_{2}, u^{b}_{3}, w^{b}_{1}, w^{b}_{2}, w^{b}_{3}$ are blue vertices. How to cover the vertices by geodesics of length $3$ with end vertices in opposite colors?}
	\label{fig:motivation-base-6x6}
\end{figure}

Using the technique of Fig.~\ref{fig:motivation-base-6x6}, the geodesics are formally constructed as follows:
\begin{enumerate}
	\item $\{P_{u^{r}_{i}}(w^{r}_{i},w^{b}_{i}):\  u^{r}_{i} \in U^{r}_{1},   w^{r}_{i} \in W^{r}_{1}, w^{b}_{i} \in W^{b}_{1}, i \in [\ell]\}$.
	\item $\{P_{w^{r}_{\ell +i}}(u^{r}_{\ell +i},u^{b}_{i}) :  w^{r}_{\ell +i} \in W^{r}_{2},   u^{r}_{\ell +i} \in U^{r}_{2}, u^{b}_{i} \in U^{b}_{1}, i [\in \ell]\}$.
	\item $\{P_{w^{b}_{2\ell +i}}(u^{b}_{2\ell +i},u^{r}_{2\ell +i}) : w^{b}_{2\ell +i} \in W^{b}_{3},   u^{b}_{2\ell +i} \in U^{b}_{3}, u^{r}_{2\ell +i} \in U^{r}_{3}, i \in [\ell]\}$.
	\item $\{P_{u^{b}_{\ell +i}}(w^{b}_{\ell +i},w^{r}_{2\ell +i}) :  u^{b}_{\ell +i} \in U^{b}_{2},   w^{b}_{\ell +i} \in W^{b}_{2}, w^{r}_{2\ell +i} \in W^{r}_{3}, i \in [\ell]\}$.
	\item $P_{u^{r}_{x}}(w^{r}_{x}, w^{b}_{x})$.
	\item Any geodesic covering of $u^{b}_{x}$.
\end{enumerate}
In Case $1$ of Stage $3$ we have thus constructed $4\ell +2$ $= \lceil \frac{2^{r-1}}{3}\rceil$ geodesics covering all the vertices of $U^{r} \cup U^{b} \cup W^{r} \cup W^{b} = A' \cup B' \cup C' \cup D'$.

\medskip\noindent
{\bf Case 2}:  $2^{r-3} = 3\cdot \ell + 2$.\\
The sets $U^{r}_{1}$, $U^{r}_{2}$, $U^{r}_{3}$ of $U^{r}$,  $U^{b}_{1}$, $U^{b}_{2}$, $U^{b}_{3}$ of $U^{b}$, $W^{r}_{1}$, $W^{r}_{2}$, $W^{r}_{3}$ of $W^{r}$, and $W^{b}_{1}$, $W^{b}_{2}$, $W^{b}_{3}$ of $W^{b}$ are the same as in Case 1. The only changes are that $U^{r}_{4} = \{u^{r}_{x}, u^{r}_{y}\}$, $U^{b}_{4} = \{u^{b}_{x}, u^{b}_{y}\}$, $W^{r}_{4} = \{w^{r}_{x}, w^{b}_{y}\}$, $W^{r}_{4} = \{w^{b}_{x}, w^{b}_{y}\}$.
The partition of $U^{r}$, $U^{b}$, $W^{r}$, and $W^{b}$ and their subpartitions are now:
\newline
\newline
\noindent
{\red  
	\begin{minipage}[c]{0.5\textwidth}
		\centering
		\begin{tabular}{ |p{.5cm}|p{.7cm} p{.7cm} p{.4cm} p{.6cm}|}
			\hline
			\multicolumn{5}{|c|}{$U^{r}$} \\
			\hline
			$U^{r}_1$ &  $u^{r}_{1},$ &   $u^{r}_{2}$, & $\ldots$, &  $u^{r}_{\ell}$\\
			\hline
			$U^{r}_2$ &  $u^{r}_{\ell+1},$ &   $u^{r}_{\ell+2}$, & $\ldots$, &  $u^{r}_{2\ell}$\\
			\hline
			$U^{r}_3$ &  $u^{r}_{2\ell+1},$ &   $u^{r}_{2\ell+2}$, & $\ldots$, &  $u^{r}_{3\ell}$\\
			\hline
			$U^{r}_4$ &  $u^{r}_{x}$, &$u^{r}_{y}$&&\\
			\hline
		\end{tabular}
	\end{minipage}
	\begin{minipage}[c]{0.5\textwidth}
		\centering
		\begin{tabular}{ |p{.5cm}|p{.7cm} p{.7cm} p{.4cm} p{.6cm}|}
			\hline
			\multicolumn{5}{|c|}{$W^{r}$} \\
			\hline
			$W^{r}_1$ &  $w^{r}_{1},$ &   $w^{r}_{2}$, & $\ldots$, &  $w^{r}_{\ell}$\\
			\hline
			$W^{r}_2$ &  $w^{r}_{\ell+1},$ &   $w^{r}_{\ell+2}$, & $\ldots$, &  $w^{r}_{2\ell}$\\
			\hline
			$W^{r}_3$ &  $w^{r}_{2\ell+1},$ &   $w^{r}_{2\ell+2}$, & $\ldots$, &  $w^{r}_{3\ell}$\\
			\hline
			$W^{r}_4$ &  $w^{r}_{x}$, &$w^{r}_{y}$&&\\
			\hline
		\end{tabular}
	\end{minipage}
}
\newline
\noindent
{\blue
	\begin{minipage}[c]{0.5\textwidth}
		\centering
		\begin{tabular}{ |p{.5cm}|p{.7cm} p{.7cm} p{.4cm} p{.6cm}|}
			\hline
			\multicolumn{5}{|c|}{$U^{b}$} \\
			\hline
			$U^{b}_1$ &  $u^{b}_{1},$ &   $u^{b}_{2}$, & $\ldots$, &  $u^{b}_{\ell}$\\ [0.5ex] 
			\hline
			$U^{b}_2$ &  $u^{b}_{\ell+1},$ &   $u^{b}_{\ell+2}$, & $\ldots$, &  $u^{b}_{2\ell}$\\
			\hline
			$U^{b}_3$ &  $u^{b}_{2\ell+1},$ &   $u^{b}_{2\ell+2}$, & $\ldots$, &  $u^{b}_{3\ell}$\\
			\hline
			$U^{b}_4$ &  $u^{b}_{x}$, &$u^{b}_{y}$&&\\
			\hline
		\end{tabular}
	\end{minipage}
	\begin{minipage}[c]{0.5\textwidth}
		\centering
		\begin{tabular}{ |p{.5cm}|p{.7cm} p{.7cm} p{.4cm} p{.6cm}|}
			\hline
			\multicolumn{5}{|c|}{$W^{b}$} \\
			\hline
			$W^{b}_1$ &  $w^{b}_{1},$ &   $w^{b}_{2}$, & $\ldots$, &  $w^{b}_{\ell}$\\
			\hline
			$W^{b}_2$ &  $w^{b}_{\ell+1},$ &   $w^{b}_{\ell+2}$, & $\ldots$, &  $w^{b}_{2\ell}$\\
			\hline
			$W^{b}_3$ &  $w^{b}_{2\ell+1},$ &   $w^{b}_{2\ell+2}$, & $\ldots$, &  $w^{b}_{3\ell}$\\
			\hline
			$W^{b}_4$ &  $w^{b}_{x}$, &$w^{b}_{y}$&&\\
			\hline
		\end{tabular}
	\end{minipage}
}
\newline
\newline
\noindent
As in Case $1$, we we can construct $4\ell +3$ geodesics to cover all the vertices of $U^{r}$, $U^{b}$, $W^{r}$, and $W^{b}$. We have thus constructed $4\ell +3$ $= \lceil \frac{2^{r-1}}{3}\rceil$ geodesics covering all the vertices of $U^{r} \cup U^{b} \cup W^{r} \cup W^{b} = A' \cup B' \cup C' \cup D'$.

Stage $1$ constructs $2^{r-3} + 2^{r-3} = 2^{r-2}$ geodesics,  Stage $2$ constructs $2^{r-3} + 2^{r-3} = 2^{r-2}$ geodesics, and Stage $3$ constructs $\lceil \frac{2^{r-1}}{3}\rceil$ geodesics, in total $2^{r-2} + 2^{r-2} + \lceil \frac{2^{r-1}}{3}\rceil$ =  $\lceil (2/3) 2^{r}\rceil$ geodesics. Together with Lemma~\ref{lem:butter-gcover-inequ2} this gives our main result: 

\begin{theorem}
	\label{gcoverBF(5)}
If $r \geq 5$, then $\gcover(\BF(r)) = \lceil (2/3) 2^{r}\rceil$.
\end{theorem}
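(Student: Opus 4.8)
The plan is to establish the equality by proving the two inequalities separately. The lower bound $\gcover(\BF(r))\ge\lceil(2/3)2^r\rceil$ is already in hand: it is precisely Lemma~\ref{lem:butter-gcover-inequ2}. So the whole task reduces to the upper bound, for which I would verify that the family of diametrals produced in Stages~1--3 (with colored vertices, using the normal representation) is a geodesic cover of $\BF(r)$ of cardinality $\lceil(2/3)2^r\rceil$; combined with the lower bound this forces equality.

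The argument splits into a \emph{counting} part and a \emph{coverage} part. For the count, Stage~1 contributes $2^{r-3}+2^{r-3}=2^{r-2}$ diametrals and Stage~2 another $2^{r-2}$. For Stage~3, with $\ell=\lfloor 2^{r-3}/3\rfloor$, one gets $4\ell+2$ diametrals when $2^{r-3}\equiv 1\pmod 3$ and $4\ell+3$ when $2^{r-3}\equiv 2\pmod 3$; a one-line arithmetic check in each residue class shows both expressions equal $\lceil 2^{r-1}/3\rceil$. Hence the total is $2^{r-2}+2^{r-2}+\lceil 2^{r-1}/3\rceil=2^{r-1}+\lceil 2^{r-1}/3\rceil$, and a second short check, splitting on $2^{r-1}\equiv 1$ or $2\pmod 3$, gives $2^{r-1}+\lceil 2^{r-1}/3\rceil=\lceil(2/3)2^r\rceil$, as required.

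For coverage I would proceed in three steps. \emph{Step A (well-definedness).} Each object listed in Stages~1--3 is genuinely a diametral: in every case the prescribed pair of end vertices lies at one extreme level in opposite colors and the prescribed middle vertex lies at the other extreme level, so Property~\ref{pro:property1} applies and produces a unique diametral. (For instance $w_i\in C$ is red and $w_{i+2^{r-1}}\in D$ is blue in Stage~1; $u_i,u_{i+1}$ have opposite parity, hence opposite colors, in Stage~2; a pair such as $\{w^{r}_i,w^{b}_i\}$ is red/blue in Stage~3.) \emph{Step B (extreme levels).} An index chase over the four blocks $A,B,C,D$ shows that the middle and end vertices of the Stage~1 and Stage~2 diametrals cover all of $U\cup W$ except for the residual sets $A',B',C',D'$, while the Stage~3 diametrals---which realize, via the $6\times 6$ gadget of Fig.~\ref{fig:motivation-base-6x6}, a geodesic cover of the balanced bipartite residual instance on $U^{r}\cup U^{b}\cup W^{r}\cup W^{b}$---cover $A'\cup B'\cup C'\cup D'$; here one tracks the red/blue multiplicities ($2^{r-3}$ red and $2^{r-3}$ blue vertices in $A'\cup B'$, $2^{r-3}$ red in $C'$, $2^{r-3}$ blue in $D'$) and disposes of the one or two leftover vertices $u^{r}_x,u^{b}_x$ (Case~1) or $u^{r}_x,u^{r}_y,u^{b}_x,u^{b}_y$ (Case~2) by the extra geodesics already counted. \emph{Step C (intermediate levels).} Every vertex at level $j$ with $1\le j\le r-1$ must be shown to lie on one of the constructed diametrals; for this I would use the self-similar decomposition of $\BF(r)$ from Lemma~\ref{lem:butterfly-geo0} and Lemma~\ref{lem:max-geo-butterfly} (deleting level $0$, resp.\ level $r$, yields two convex copies of $\BF(r-1)$) together with the explicit bit-description of the unique level-$0$ to level-$r$ geodesics, arguing (by induction on $r$, base case $r=5$ checked directly) that as the middle vertex $v$ ranges over the chosen set and $\{u,w\}$ over the chosen end pairs, the halves $P(v,u)$ and $P(v,w)$ sweep every row at every level.

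I expect Step~C to be the main obstacle: Stages~1--3 are tuned to cover $U\cup W$ as economically as possible, and it is not a priori clear that the resulting diametrals also reach all of the roughly $r2^r$ internal vertices. The cleanest route seems to be the inductive one above, pushing the bookkeeping through the $\BF(r-1)$-decomposition; organizing this so that it stays readable---and pinning down exactly why $r\ge 5$ is needed for the recursion to close---is where the real effort lies.
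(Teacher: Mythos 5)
Your proposal is essentially the paper's own argument: the lower bound is Lemma~\ref{lem:butter-gcover-inequ2}, and the upper bound is exactly the Stage~1--3 construction with the same count $2^{r-2}+2^{r-2}+\lceil 2^{r-1}/3\rceil=\lceil(2/3)2^r\rceil$. The one point where you go beyond the paper is your Step~C: the paper only verifies that the chosen diametrals cover $U\cup W$ and never explicitly checks that the internal vertices at levels $1,\dots,r-1$ are swept out, so the obstacle you flag is a gap in the paper's exposition rather than something its proof resolves for you --- your proposed inductive treatment via the $\BF(r-1)$-decomposition would be a genuine addition if carried out.
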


In Theorem~\ref{gcoverBF(5)} we require $r \geq 5$ because $\ell= \lfloor \frac{2^{r-3}}{3} \rfloor$ is well-defined only when $r \geq 5$. It can be checked by hand that $\gcover(\BF(1))=2$, $\gcover(\BF(2))=4$, $\gcover(\BF(3))=6$ and $\gcover(\BF(4))=12.$

\section{The edge geodesic cover problem}
\label{sec:edge-butterfly}

In this section we turn our attention to the edge geodesic cover problem for $\BF(r)$. An edge $uv$ of $\BF(r)$ is called a {\it $(2,4)$-edge} if $\{\deg(u), \deg(v)\} = \{2, 4\}$. The number of $(2,4)$-edges of $\BF(r)$ is $2^{r+2}$~\cite{Leighton1992, Manuel2008, RaMaPa16}. 

\begin{lemma}
	\label{lem:upper-bnd-gparte}
If $r\ge 3$, then $E(\BF(r))$ can be partitioned by a set $S(r)$ of edge-disjoint isometric cycles of length $4r$, where $|S(r)| = 2^{r-1}$ and each isometric cycle of $S(r)$ has two vertices at level $0$.
\end{lemma}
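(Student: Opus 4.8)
The plan is to exhibit the decomposition explicitly by grouping the rows of $\BF(r)$ into $2^{r-1}$ pairs and showing that each pair spans an isometric $4r$-cycle, and that these cycles are pairwise edge-disjoint and exhaust $E(\BF(r))$. Recall that $\BF(r)$ has $(r+1)2^r$ vertices and $r2^{r+1}$ edges; a partition into $2^{r-1}$ cycles of length $4r$ accounts for exactly $2^{r-1}\cdot 4r = r2^{r+1}$ edges, so a counting identity will confirm that edge-disjoint $4r$-cycles covering every edge automatically partition $E(\BF(r))$. First I would set up coordinates: for each $s\in\{0,1\}^{r-1}$ (thought of as a string on bits $2,\dots,r$ with the first bit free), consider the four rows obtained by prepending $0$ or $1$ and by complementing bit $r$ or not — more precisely, I would pair row $s$ with the row $\bar s$ that differs from $s$ only in the bit that is ``switched'' as one walks from level $0$ up to level $r$ and back down. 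The key structural fact (already implicit in Lemma~\ref{lem:max-geo-butterfly} and the convexity of $\BF(r)-W$, $\BF(r)-U$) is that from a level-$0$ vertex $[0,s]$ there is a unique geodesic of length $r$ up to any level-$r$ vertex, and the two level-$r$ endpoints reachable while ending in a prescribed ``row twist'' are exactly $[r,s]$ and one sibling; concatenating the up-geodesic to $[r,t]$ with the down-geodesic back to the other level-$0$ vertex $[0,s']$, and then closing up, yields a closed walk of length $4r$ using $4r$ distinct edges.

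Next I would verify the three required properties in turn. Edge-disjointness: an edge $[j,s][j',s']$ with $|j-j'|=1$ lies on the cycle associated to the unique pair of rows $\{$row of $s$, row of $s'\}$ determined by the two bit-strings and the level $j$; since each edge of $\BF(r)$ determines a unique such pair, no edge is shared by two cycles of $S(r)$. Covering: every edge of $\BF(r)$ joins levels $j$ and $j+1$ and flips at most bit $j+1$, so it is caught by the cycle of the corresponding row pair — together with the count $2^{r-1}\cdot 4r = |E(\BF(r))|$ this forces a partition. Isometry: I must show each $4r$-cycle $C$ is a convex (equivalently, isometric) subgraph, i.e.\ for any two vertices of $C$ the distance in $\BF(r)$ equals the distance along $C$. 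Here I would use that $C$ is the union of four geodesic arcs of length $r$ (two ``upward'' of the form $[0,\cdot]\to[r,\cdot]$ and two ``downward''), meeting only at the four extreme vertices at levels $0$ and $r$; since any geodesic in $\BF(r)$ contains at most two level-$0$ and at most two level-$r$ vertices (Lemma~\ref{lem:butterfly-geo0}), and the level coordinate changes by exactly $1$ along every edge, a shortest $\BF(r)$-path between two points of $C$ cannot ``shortcut'' across rows, so its length equals the along-$C$ distance. Finally, ``two vertices at level $0$'' is immediate from the construction: each cycle passes through exactly the two level-$0$ vertices $[0,s]$ and $[0,s']$ of its row pair.

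The main obstacle I anticipate is the isometry verification — proving each $4r$-cycle is a geodesically convex subgraph rather than merely a cycle of the right length. The delicate point is that two vertices on opposite ``sides'' of the cycle (say one on an upward arc and one on a downward arc, at comparable levels) could a priori be joined in $\BF(r)$ by a path shorter than the cycle distance by exploiting that $\BF(r)$ is highly connected. To handle this I would pin down, for vertices at levels $j_1 \le j_2$, the exact distance formula in $\BF(r)$: $d([j_1,s],[j_2,t])$ depends only on the levels and on which bits in positions $\min(j_1,j_2)+1,\dots$ differ; then one checks that along $C$ this distance is realized, and that any competing path, which must descend to level $0$ or rise to level $r$ to change the ``wrong'' bits, is at least as long. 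An alternative, cleaner route is to invoke that $\BF(r)-U$ and $\BF(r)-W$ split into convex copies of $\BF(r-1)$ and argue by induction on $r$: the portion of each $4r$-cycle strictly between levels $0$ and $r$ lives in such a copy, where an $(4(r-1))$-subcycle claim applies, and the two remaining levels are glued on convexly. I would choose whichever of these two arguments the explicit coordinate description makes shortest, and present the construction of $S(r)$ first so that the isometry proof has concrete arcs to refer to.
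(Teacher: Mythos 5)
Your plan has the right global picture (each cycle of $S(r)$ is a closed union of four level-monotone geodesic arcs of length $r$, two level-$0$ and two level-$r$ vertices, and the count $2^{r-1}\cdot 4r=r2^{r+1}=|E(\BF(r))|$ upgrades ``edge-disjoint and covering'' to ``partition''), but the heart of the lemma --- actually exhibiting $S(r)$ --- is never pinned down, and the indexing you propose cannot work as stated. A $4r$-cycle has $4r$ distinct vertices while two rows contain only $2(r+1)<4r$ vertices, so no pair of rows ``spans'' such a cycle; more to the point, an up-geodesic from $[0,s]$ to $[r,t]$ changes its row at every cross-edge it uses, so a single cycle meets many rows and, conversely, the edges of $\BF(r)$ between a fixed pair of adjacent rows are distributed over many different cycles. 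Consequently the edge-disjointness and covering arguments (``each edge determines a unique pair of rows, hence a unique cycle'') are circular: they presuppose an assignment of edges to cycles that the construction does not supply. The isometry verification is likewise incomplete: the observation that levels change by $\pm1$ along edges only bounds $d$ from below by the level difference, which says nothing about two vertices at comparable levels on the two different ascending arcs of $C$, whose cycle-distance is about $2r$; ruling out a shortcut there genuinely requires either the explicit distance formula of $\BF(r)$ or a convexity/induction argument, neither of which is carried out.

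For comparison, the paper avoids the global indexing problem entirely by inducting on $r$: it takes the partition $S(k-1)$ of each of the two convex copies of $\BF(k-1)$ sitting above level $0$ in $\BF(k)$, and splices the two copies $C'$, $C''$ of each cycle $C\in S(k-1)$ into two new cycles $C_1,C_2$ of length $4k$ by routing through the four level-$0$ vertices adjacent to the level-$0$ vertices of $C'$ and $C''$; edge-disjointness and the cardinality $2^{k-1}$ then come for free from the induction hypothesis, and only the base case $\BF(3)$ needs an explicit picture. This is essentially the ``alternative, cleaner route'' you mention at the end of your proposal; to repair your write-up you should commit to that inductive construction (or else supply a genuinely explicit list of the $2^{r-1}$ cycles together with a distance formula to certify isometry), rather than the row-pairing scheme.
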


\begin{proof}
	The proof is by induction on the dimension $r$ of $\BF(r)$. The base case is $r=3$ and is ellaborated in Fig.~\ref{fig:Butterfly-edge-gparte-base}, where $E(\BF(3))$ is partitioned by a set $S(3)$ of edge-disjoint isometric cycles of length $4\cdot 3$ and each isometric cycle of $S(3)$ has two vertices at level $0$. 
	
\begin{figure}[ht!]
	\centering
	\includegraphics[scale=0.55]{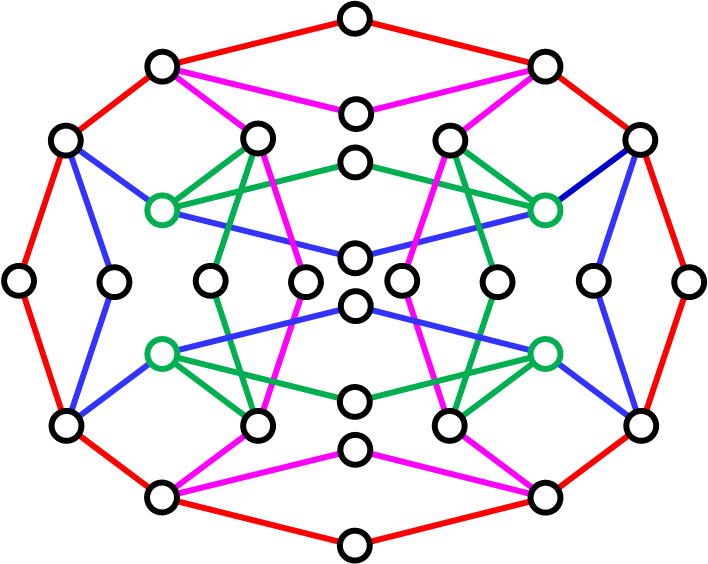}
	\caption{The base case is $\BF(3)$ in which $E(\BF(3))$ is partitioned by a set $S$ of edge-disjoint isometric cycles of length $4\cdot 3$ where $|S| = 2^{3-1} = 4$.}
	\label{fig:Butterfly-edge-gparte-base}
\end{figure}
	
Assume now that the edge set of $\BF(k-1)$ can be partitioned by a set $S(k-1)$ of edge-disjoint isometric cycles of length $4(k-1)$, where $|S| = 2^{k-2}$ and each isometric cycle $C$ of $S(k-1)$ has two vertices $u$ and $w$ at level $0$. A cycle $C$ in $S(k-1)$ is represented by $u-P-w-Q-u$ where $u$ and $w$ are the two vertices of $C$ at level $0$, $P$ is the path segment of length $2k-2$ in $C$ between $u$ and $w$ and $Q$ is the path segment of length $2k-2$ in $C$ between $u$ and $w$. Refer to Fig.~\ref{fig:upperbound-gparte}. 

\begin{figure}[ht!]
	\centering
	\includegraphics[scale=0.40]{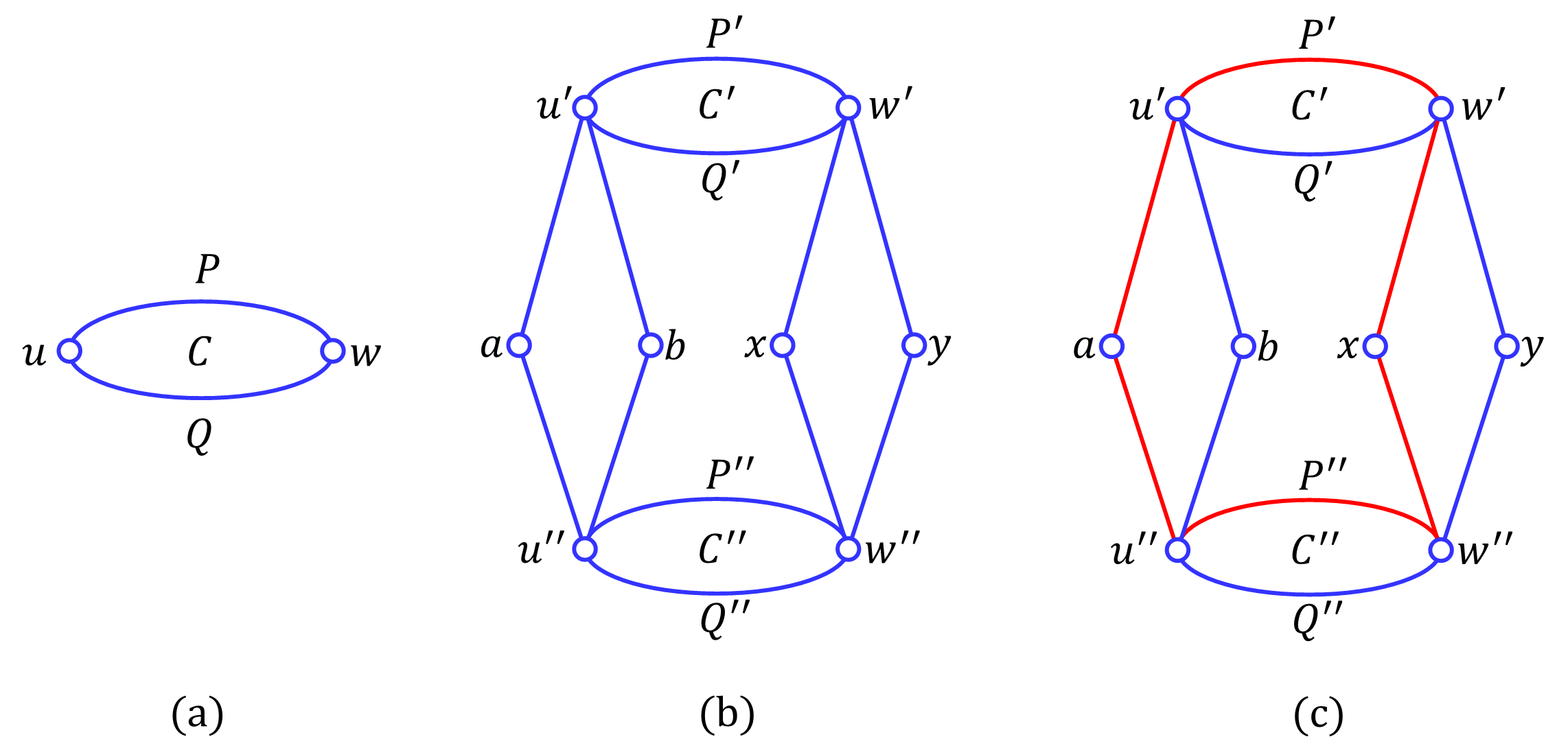}
	\caption{(a) An isometric cycle $C=u-P-w-Q-u$ in $\BF(k-1)$. (b) Two copies of $C$ are $C'$ and $C''$. The two isometric cycles $C'$ and $C''$ are connected by vertices $a$, $b$, $x$ and $y$ which are at level $0$. (c) The two isometric cycles $C'$ and $C''$  generate two different isometric cycles $C_1$ and $C_2$ (marked in different color) in $\BF(k)$.}
	\label{fig:upperbound-gparte}
\end{figure}

Recall that $\BF(r)$ has two copies of $\BF(k-1)$, $\BF'(k-1)$ and $\BF''(k-1)$, where  $V(\BF'(k-1)) = \{v':\ v \in \BF(k-1)\}$ and $V(\BF''(k-1)) = \{v'':\ v \in \BF(k-1)\}$. Also, there are two copies of $S(k-1)$ in $\BF(k)$, $S'(k-1)$ and $S''(k-1)$, where 
\[S'(k-1) = \{C' = u'-P'-w'-Q'-u':\ C = u-P-w-Q-u \in S(k-1)\}\] and \[S''(k-1) = \{C'' = u''-P''-w''-Q''-u'':\ C = u-P-w-Q-u \in S(k-1)\}.\] The length of cycles $C \in S(k-1)$, $C' \in S'(k-1)$, and $C'' \in S''(k-1)$ is $4(k-1)$.
	In $\BF(r)$, there are two vertices $a$ and $b$ at level $0$ which are adjacent to $u'$ of $C'$ and $u''$ of $C''$. In the same way, there are two vertices $x$ and $y$ at level $0$ which are adjacent to $w'$ of $C'$ and $w''$ of $C''$.  Refer to Fig.~\ref{fig:upperbound-gparte}. Now define the cycles \[C_1 = a-u'-P'-w'-x-w''-P''-u''-a\] and \[C_2 = b-u'-Q'-w'-y-w''-Q''-u''-b.\] Since, $P'$, $P''$, $Q'$, and $Q''$ each are of length $2k-2$, it is easy to observe that the length of $C_1$ and $C_2$ is $4k$. Let us define $S(k) = \{C_1, C_2 : C \in S(k-1)\}$. Each cycle $C_1$ and $C_2$ have two vertices at level $0$. It is easy to observe that the cycles in $S(k)$ are isometric and they are mutually edge-disjoint. Moreover, the cardinality of $S(k)$ is $2^{k-1}$. Thus, the edge set of $\BF(k)$ is partitioned by the edge-disjoint isometric cycles of $S(k)$ such that $|S(k)| = 2^{k-1}$.
\end{proof}

\begin{theorem}
\label{thm:ButterflyGcovere}
If $r\ge 3$, then $\gcovere(\BF(r)) = \gparte(\BF(r)) = 2^r$.
\end{theorem}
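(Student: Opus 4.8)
The plan is to establish the chain $2^r \le \gcovere(\BF(r)) \le \gparte(\BF(r)) \le 2^r$. Since every edge geodesic partition is in particular an edge geodesic cover, the middle inequality is automatic, so it suffices to prove the outer two: an upper bound via the explicit cycle partition of Lemma~\ref{lem:upper-bnd-gparte}, and a lower bound via a counting argument on $(2,4)$-edges.

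For the upper bound I would invoke Lemma~\ref{lem:upper-bnd-gparte}: $E(\BF(r))$ is partitioned into a set $S(r)$ of $2^{r-1}$ edge-disjoint isometric cycles, each of length $4r$. The key observation is that an isometric cycle $C$ of even length $4r$ splits at two antipodal vertices into two arcs $A_1,A_2$ of length $2r$ each; these arcs are edge-disjoint, together cover $E(C)$, and each is a geodesic of $\BF(r)$, because $C$ being isometric forces its two antipodal vertices to be at distance $2r$ in $\BF(r)$, a distance realized by each $A_i$. Replacing every cycle of $S(r)$ by its two arcs produces $2\cdot 2^{r-1}=2^r$ geodesics that partition $E(\BF(r))$, so $\gparte(\BF(r)) \le 2^r$.

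For the lower bound I would first record a preparatory claim: every geodesic of $\BF(r)$ contains at most four $(2,4)$-edges. The relevant facts are that the degree-$2$ vertices of $\BF(r)$ are exactly those at level $0$ and level $r$, and that (for $r\ge 3$) a $(2,4)$-edge is precisely an edge incident to a degree-$2$ vertex, since the neighbours of such a vertex have degree $4$. Hence the number of $(2,4)$-edges on a geodesic $P$ is at most the sum, over degree-$2$ vertices $v$ lying on $P$, of the number of edges of $P$ incident to $v$, which is $2$ when $v$ is internal and $1$ when $v$ is an endpoint of $P$. Lemma~\ref{lem:butterfly-geo2} bounds the number of degree-$2$ vertices on $P$ by three: if there are three, Corollary~\ref{cor:butterfly-geo1} forces both endpoints of $P$ to be degree-$2$, so at most one degree-$2$ vertex is internal and the count is $\le 2+1+1=4$; if there are two at the same level, Corollary~\ref{cor:butterfly-geo} makes $P$ the maximal geodesic between them, so both are endpoints and the count is $\le 1+1=2$; the remaining cases contribute at most $2+2=4$. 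Using this claim together with the known count of $2^{r+2}$ $(2,4)$-edges in $\BF(r)$, any edge geodesic cover with $k$ geodesics satisfies $4k \ge 2^{r+2}$, so $\gcovere(\BF(r)) \ge 2^r$.

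Combining the two bounds gives $2^r \le \gcovere(\BF(r)) \le \gparte(\BF(r)) \le 2^r$, whence all three equal $2^r$. I expect the only genuinely delicate point to be the preparatory claim, in particular the bookkeeping showing that a geodesic cannot accumulate more than four $(2,4)$-edges even when it passes through two degree-$2$ vertices as internal vertices, one at level $0$ and one at level $r$; the antipodal splitting of an even isometric cycle and the final division argument are routine.
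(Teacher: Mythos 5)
Your proposal is correct and follows essentially the same route as the paper: the lower bound via the $2^{r+2}$ $(2,4)$-edges divided by four per geodesic, and the upper bound by splitting each of the $2^{r-1}$ isometric $4r$-cycles of Lemma~\ref{lem:upper-bnd-gparte} into two diametral arcs of length $2r$. Your careful justification that a geodesic carries at most four $(2,4)$-edges (via Lemma~\ref{lem:butterfly-geo2}, Corollary~\ref{cor:butterfly-geo} and Corollary~\ref{cor:butterfly-geo1}) is a point the paper asserts without proof, so that bookkeeping is a worthwhile addition rather than a deviation.
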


\begin{proof}
	In order to derive the claimed lower bound for $\gparte(\BF(r))$, let us consider all $(2,4)$-edges of $\BF(r)$. As already mentioned, there are $2^{r+2}$ $(2,4)$-edges in $\BF(r)$. Since a geodesic can cover a maximum of four $(2,4)$-edges of $\BF(r)$, $\gcovere(\BF(r)) \geq 2^{r+2}/4 = 2^r$. Thus, \[\gparte(\BF(r)) \geq \gcovere(\BF(r)) \geq 2^r.\]

To prove $\gparte(\BF(r)) \leq 2^r$, it is enough to construct an edge geodesic partition of cardinality $2^r$ for $\BF(r)$.   By Lemma \ref{lem:upper-bnd-gparte},  the edge set of $\BF(r)$ can be partitioned by a set $S$ of edge-disjoint isometric cycles of length $4r$, where $|S| = 2^{r-1}$.  In other words, the edge set of $\BF(r)$ can be partitioned by a set $R$ of edge-disjoint diametrals of length $2r$ such that $|R| = 2^{r}$. Thus, $\gparte(\BF(r)) \leq 2^r$.
\end{proof}

\section{Conclusion}
\label{sec:conclusion}
The geodesic cover problem is one of the fundamental problems in graph theory, but only partial solutions are available for most situations. The geodesic cover number in both vertex and edge version was unknown for butterfly networks, in this paper we provide a complete solutions for both versions. 

Even though the geodesic cover and geodesic partition are frequently used in fixed interconnection networks, the exact values of geodesic cover number and geodesic partition
number are unknown for popular architectures such as shuffle-exchange, de Bruijn, Kautz, star, pancake, circulant, wrapped butterfly, CCC networks. The geodesic cover problem and the geodesic partition problem (their edge versions) are wide open for researcher.

\section*{Acknowledgments}
This work was supported and funded by Kuwait University, Research Grant No.\ (FI 01/22).
\baselineskip14pt

\end{document}